\newcommand\eps{\varepsilon}
\newcommand\R{{\mathbb R}}
\newcommand\Z{{\mathbb Z}}
\newtheorem{theorem}{Theorem}
\newtheorem{lemma}{Lemma}
\newtheorem{proposition}{Proposition}
\newtheorem{remark}{Remark}
\title{Saddle-center and periodic orbit:\\dynamics near symmetric
heteroclinic connection}
\author{L.M. Lerman$^{a,b}$, K.N. Trifonov$^b$\\
\normalsize $^a$National Research University Higher School of Economics,\\
\normalsize $^{b}$Scientific and Educational Mathematical Center ``Mathematics of Future Technologies’’ \\
\normalsize
Lobachevsky State Research University of Nizhny Novgorod, Russia}
\date{}
\begin{document}
	\maketitle
\begin{abstract}
An analytic reversible Hamiltonian system with two degrees of freedom is studied
in a neighborhood of its symmetric heteroclinic connection made up of a symmetric
saddle-center, a symmetric orientable saddle periodic orbit lying in the same level
of a Hamiltonian and two non-symmetric heteroclinic orbits permuted by
the involution. This is a codimension one structure and therefore it can
be met generally in one-parameter families of reversible Hamiltonian
systems. There exist two possible types of such connections in dependence on how
the involution acts near the equilibrium. We prove a series of theorems which
show a chaotic behavior of the system and those in its unfoldings, in particular,
the existence of countable sets of transverse homoclinic orbits to the saddle periodic orbit
in the critical level, transverse heteroclinic connections involving a pair of saddle
periodic orbits, families of elliptic periodic orbits, homoclinic
tangencies, families of homoclinic orbits to saddle-centers in the unfolding, etc.
As a byproduct, we get a criterion of the existence of homoclinic orbits to a saddle-center.
\end{abstract}

{\bf The orbit structure of a non-integrable Hamiltonian system with two
or more degrees of freedom is very complicated and usually it is
impossible, except for some specific model situations, to describe its structure
more or less completely. By this reason, a fruitful way to understand the orbit
behavior in some parts of the phase space is to detect some simple invariant subsets
(usually containing a finite number of orbits) whose neighborhoods can be understood
from the viewpoint of their orbit structure. When it has been done, we try to find
such structures in general systems and thus to describe partially the
behavior of the system under study. This approach goes back to A. Poincar\'e.
The problem investigated here follow these lines. It was inspired by the
study of stationary waves in a nonlocal Whitham equation \cite{KLM} that
is reduced to the reversible Hamiltonian system with two degrees of
freedom for which homoclinic orbits to different type of equilibria have
to be detected. We rely in this research on earlier results on the
behavior near a homoclinic orbit to a saddle-center equilibrium
\cite{KL,KL1,MHR,GR1,GR,Yag} as well as near
homoclinic tangencies \cite{New,MR,GoSh,Duarte,GTSh,Gon,DGG}.
The results obtained demonstrate how much can be
understood at this approach.

}

\tableofcontents

\newpage

\section{Introduction}

Studying Hamiltonian dynamics is an interesting and hard problem
attracting researchers from many branches of science since Hamiltonian
systems serve as mathematical models in different problems in physics,
chemistry and engineering. The structure of such systems is usually
rather complicated, therefore one of a fruitful approach to these type of
problems is the study of the given system near some invariant sets which can be
selected by simple conditions. Studying systems in neighborhoods of
homoclinic orbits and heteroclinic connections is the problem of such
type. Investigations of dynamical phenomena near a homoclinic orbit to a
saddle periodic orbit was the first such problem, its set up and
understanding the complexity of orbit behavior of the system near such
structure goes back to Poincar\'e \cite{Poin}. The real complexity of
orbit behavior was understood due to works by Birkhoff \cite{Birk}, Smale
\cite{Smale} and finally Shilnikov \cite{SHil}. Other problems, where
complicated dynamics was detected, were studied in many papers by Shilnikov
and coauthors, among them the most influential are
\cite{Shil,Shil1,GSh,Shil3}. Homoclinic dynamics in Hamiltonian systems
began studying in \cite{Dev} where Shilnikov results about the complicated
dynamics near a saddle-focus homoclinic loop were carried over to
Hamiltonian case. The generalization of the Melnikov method onto the
autonomous case for systems close to Hamiltonian integrable \cite{LU1}
allowed one to present examples of a complicated behavior both for
Hamiltonian perturbation of an integrable Hamiltonian system with a
saddle-focus skirt and for dissipative perturbations. The complicated
dynamics near a bunch of homoclinic orbits to a saddle in a Hamiltonian system
was detected in \cite{TurShi} (see generalizations of these results in
\cite{Tur,Hom}). More close to the topic of the present paper results
of \cite{Ler} are where was studied first a complicated dynamics near
a saddle-center homoclinic loop, when the equilibrium was not hyperbolic.
The set-up for this problem was earlier presented in \cite{Conley}, but no essential
results were found there. Results of \cite{Ler} was later extended in
different directions in \cite{KL,KL1,MHR,GR1,GR,Yag}.

In this paper the dynamics is studied in a reversible Hamiltonian system with two
degrees of freedom in a neighborhood of a symmetric
heteroclinic connection which consists of a symmetric saddle-center, a
symmetric saddle periodic orbit which are connected by two nonsymmetric
heteroclinic orbits being permuted by the involution.
(see. Fig.\ref{fig:1}).

\begin{figure}[h]
	\centering
	\includegraphics[width=0.6\linewidth]{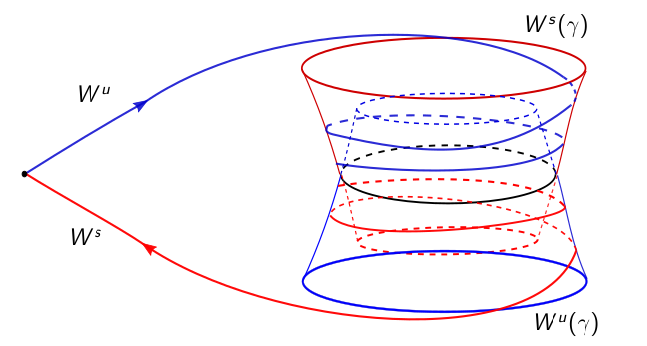}
	\caption{Scheme of the heteroclinic connection}
\label{fig:1}
\end{figure}

This type of connection is a codimension one phenomenon in the class of
reversible Hamiltonian systems. Thus, such a structure can irreparably
appear in one-parameter families. Systems with such structures are met in
applications. For instance, they were discovered in \cite{KLM} where solitons
in a nonlocal Whitham equation \cite{Malkin} were studied.
Also this structure can be found in a one parameter family of reversible Hamiltonian
systems near a destruction of a homoclinic orbit to a saddle-center.
Results of \cite{KL} suggest that saddle periodic orbits accumulate to the
saddle-center loop in the singular level of the Hamiltonian and therefore,
after destruction, unstable separatrix of the symmetric saddle-center can lie on the
stable manifold of a symmetric saddle periodic orbit. Due to reversibility, there is
a pairing stable separatrix of the saddle-center that lie on the unstable
manifold of the same periodic orbit as it is symmetric.

One more application of results obtained is a criterion of the existence
of saddle-center homoclinic loops in a reversible Hamiltonian system (Theorem \ref{loops}
below).

Main results of the paper prove the existence of hyperbolic sets and
elliptic periodic orbits near the heteroclinic connection. Existence of hyperbolic sets
is based on the construction of families of transverse homoclinic orbits
and heteroclinic connections involving two saddle periodic orbits and four
transverse heteroclinic orbits for them. Existence of elliptic periodic
orbits is prove by the same scheme. We look for homoclinic orbits with quadratic
tangencies to saddle periodic orbits in different situations and apply
then results of going back to Newhouse \cite{New,New1} and
Gavrilov-Shilnikov \cite{GSh} and many others \cite{New,MR,GoSh,Duarte,GTSh,Gon,DGG}
on the existence of cascades of elliptic periodic orbits.

\section{Setting up and main notions}

Let $(M,\Omega)$ be a real analytic four-dimensional symplectic manifold, $\Omega$ be its
symplectic 2-form and $H$ be a real analytic function (a Hamiltonian). Such function defines
a Hamiltonian vector field $X_H$ on $M$. Henceforth we assume $X_H$ to have an equilibrium
$p$ of the saddle-center type and without a loss of generality we assume $H(p)=0$.
One more assumption we use is the existence in the same level of $H=0$
a saddle periodic orbit $\gamma$. We use below the notation $V_c=\{x\in M |H(x)=c\}$.

Recall an equilibrium $p$ of $X_H$ on $M$ is called
to be a saddle-center \cite{Ler}, if the linearization operator of the
vector field at $p$ has a pair of pure imaginary eigenvalues $\pm i\omega,\;\omega\in
\R\setminus \{0\},$ and a pair nonzero reals $\pm\lambda\ne 0$. In a
neighborhood of such equilibrium the system has a unique local invariant
smooth two-dimensional invariant symplectic submanifold $W^c_p$ filled with closed orbits
$l_c$ (Lyapunov family of periodic orbits). For the case of analytic $M,
H$ the submanifold $W^c_p$ is real analytic. Near the point $p$ in the level $H=c$
periodic orbit $l_c$ is of saddle type and is located each on its own
level $V_c$.

Also in a neighborhood of $p$ the system has local 3-dimensional center-stable $W^{cs}$ and
center-unstable manifold $W^{cu}$ containing both $p$. These submanifolds contain orbits being
asymptotic, as $t\to\infty$ (for $W^{cs}$), to periodic orbits $l_c$, or as $t\to -\infty$
(for $W^{cu}$), here $W^c = W^{cs}\cap W^{cu}$. Submanifold $W^{cs}$
(respectively, $W^{cu}$) is foliated by levels $V_c$ into local stable (unstable) manifolds
of periodic orbits $l_c$, these submanifolds are diffeomorphic to cylinders $I\times S^1$.
Besides, $W^{cs}$ (respectively, $W^{cu}$), being a solid cylinder, contains as an
axis, an analytic curve $W^s \;(W^u)$ -- local stable (unstable) manifold of the equilibrium $p$,
they consist of $p$ and two semi-orbits tending $p$ as $t\to\infty \;(t\to -\infty)$.

As was supposed above, $V_0$ contains a  saddle periodic orbit $\gamma$. In
the whole $M$ orbit $\gamma$ belongs to a one-parameter family of such
periodic orbits $\gamma_c \subset V_c$ forming an analytic 2-dimensional symplectic
cylinder. Recall that in $V_c$ periodic orbit $\gamma_c$ has two local analytic
2-dimensional Lagrangian submanifolds $W^s(\gamma)$, $W^u(\gamma)$ being
its stable and unstable local submanifolds. All of them are topologically
either cylinders (if its multipliers are positive) or M\"obius strips (if
its multipliers are negative).

Later on in the paper the vector field $v=X_H$ under consideration is supposed to be
reversible as well. This means \cite{Devaney76a} that on $M$ acts a smooth
involution $L: M\to M,$ $L^2 = id_M,$ и $v$ obeys the identity
$DL(v) = - v\circ L.$ For its solutions this property reads as follows:
if $x(t)$ is a solution to $v$, then $x_1(t) = Lx(-t)$ is also its solution.

Orbit $\gamma$ of a reversible $v$ is called {\em symmetric}, if it is invariant w.r.t.
the action of $L$. In particular, an equilibrium $p$, $v(p)=0,$ is symmetric, if
$L(p)=p,$ i.e. $p$ belongs to the fixed point set of $L$, $Fix(L)=\{x\in M |L(x)=x\}$.
The following statement holds true \cite{Devaney76a}.
\begin{proposition}
An orbit of a reversible vector field is symmetric, iff it intersects $Fix(L)$. A symmetric
periodic orbit intersects $Fix(L)$ at two points exactly. The inverse statement is also valid:
if an orbit of a reversible vector field intersects $Fix(L)$ at two different points $x_1, x_2$, then
this orbit is symmetric periodic one and its period is equal to the doubled transition time
from $x_1$ to $x_2$.
\end{proposition}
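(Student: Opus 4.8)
The plan is to run everything off two ingredients: the reversibility identity, which guarantees that $\tilde x(t)=Lx(-t)$ is a solution of $v$ whenever $x(t)$ is, and uniqueness of solutions for the analytic (hence locally Lipschitz) field $v$ through a prescribed point. First I would settle the characterization of symmetry. For the easy direction, suppose the orbit of $x(t)$ meets $Fix(L)$; after shifting time assume $Lx(0)=x(0)$. Then $\tilde x(t)=Lx(-t)$ is a solution with $\tilde x(0)=Lx(0)=x(0)$, so by uniqueness $Lx(-t)=x(t)$ for all $t$, and $L$ carries each point $x(-t)$ of the orbit to the point $x(t)$ of the same orbit; the orbit is symmetric. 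Conversely, if the orbit is $L$-invariant, pick $x(0)$ on it; then $Lx(0)$ lies on the orbit, so $Lx(0)=x(\tau)$ for some finite $\tau$. The solutions $Lx(-t)$ and $x(t+\tau)$ agree at $t=0$, so uniqueness forces $Lx(-t)=x(t+\tau)$; evaluating at $t=-\tau/2$ gives $Lx(\tau/2)=x(\tau/2)$, whence $x(\tau/2)\in Fix(L)$ and the orbit meets the fixed-point set.

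For the count of intersections on a symmetric periodic orbit of minimal period $T>0$, I would normalize so that $x(0)\in Fix(L)$, which by the first part yields $Lx(s)=x(-s)$ for every $s$. A point $x(s)$ then lies in $Fix(L)$ exactly when $x(-s)=x(s)$, that is, when $2s\equiv 0\pmod T$. On the interval $[0,T)$ this congruence has precisely the two solutions $s=0$ and $s=T/2$, producing two distinct intersection points, distinct because $T$ is minimal.

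Finally, for the converse statement I would assume the orbit meets $Fix(L)$ at $x_1=x(0)$ and $x_2=x(a)$ with $x_1\ne x_2$ and $a>0$ the first transition time from $x_1$ to $x_2$. By the characterization the orbit is symmetric, so $Lx(s)=x(-s)$; the condition $x_2=x(a)\in Fix(L)$ gives $x(-a)=x(a)$, and by determinism $x(t-a)=x(t+a)$ for all $t$. Taking $t=a$ yields $x(0)=x(2a)$, so the orbit is periodic with period dividing $2a$. The computation of the previous step shows the $Fix(L)$-points on this orbit occur at multiples of $T/2$, so the smallest positive $a$ with $x(a)\in Fix(L)\setminus\{x_1\}$ is $a=T/2$; hence $T=2a$, the doubled transition time.

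The only genuinely delicate bookkeeping — and the step I expect to require the most care — is the periodic case: one must invoke minimality of $T$ to know that $s\mapsto x(s)$ is injective on $[0,T)$, so that the algebraic solutions of $2s\equiv 0\pmod T$ translate into exactly two geometric points and so that the transition time is $T/2$ rather than a smaller submultiple. Everything else follows directly from uniqueness of solutions combined with the reversibility identity $Lx(-t)$ being again a solution.
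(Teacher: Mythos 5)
Your proof is correct: the paper itself gives no argument for this proposition (it is quoted from Devaney's paper \cite{Devaney76a}), and your reasoning is precisely the standard one — the reversed solution $Lx(-t)$ plus uniqueness of solutions yields the symmetry criterion, and injectivity of $s\mapsto x(s)$ on $[0,T)$ for the minimal period $T$ turns $2s\equiv 0\pmod T$ into exactly the two intersection points $x(0)$, $x(T/2)$ and gives $T=2a$ in the converse. Your closing remark correctly identifies the only delicate point, namely that minimality of $T$ is what rules out extra intersections and submultiple periods.
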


For a Hamiltonian system the reversibility requires of a clarification, since the involution acts
on the symplectic form
$$
[L^*\Omega](\xi,\eta)= \Omega(DL(\xi),DL(\eta)).
$$
We assume below that an analytic involutive diffeomorphism $L$ is
{\em anti-canonical} mapping, i.e. $L^*(\Omega) = -\Omega$ and $L$ is concordant with
$H$: $H\circ L = H$. In this case the following identities hold:
$$
DL (X_H) = -X_H\circ L,\;\mbox{\rm и}\;	 \Phi_t\circ L = L \circ \Phi_{-t}.
$$
We also assume $Fix(L)$ to be an analytic two-dimensional submanifold in
$M$ (not obligatory connected).

Finally, we assume $X_H$ to have a heteroclinic connection consisting of a
symmetric saddle-center $p$, $H(p)=0$, a symmetric periodic orbit $\gamma$ in the same level of
$H(\gamma)=0$ and two heteroclinic orbits: $\Gamma_1$ going, as $t$
increases, from $\gamma$ to $p$, and $\Gamma_2= L(\Gamma_1)$ going, as $t$ increases,
from $p$ to $\gamma$. Our task is to study the orbit behavior in a neighborhood of this
heteroclinic connection. It is worth observing that the problem, by its set up, is bifurcation,
since the orbit structure varies as a values of the Hamiltonian $c$ varies near a critical value
$c=0$. For instance, on the levels others than $V_0$, the equilibrium is absent
thus the contour is destroyed and bifurcations are expected.

\section{Moser coordinates}

To examine orbit structure of the system near a connection, we shall use convenient
coordinates in a neighborhood of $p$ and in a neighborhood of $\gamma$. Corresponding
results in the analytic case are due to Moser \cite{M,M0}, a finite-smooth version for
a saddle fixed point of a symplectic diffeomorphism exists in \cite{Gon}.
\begin{theorem}\label{t1}
Let $X_H$ be an analytic Hamiltonian vector field and $p$ its equilibrium of the saddle-center
type. Then there is a neighborhood $U$ of $p$, analytic symplectic coordinates $(x_1, y_1, x_2,
y_2)$, $\Omega=dx_1\wedge dy_1+dx_2 \wedge dy_2$, and a real analytic function $h(\xi,\eta)$,
such that $H$ casts in the form
$$
H(x_1,y_1,x_2,y_2) = h(\xi,\eta) = \lambda \xi + \omega \eta + R(\xi, \eta),\qquad  R(\xi, \eta)=
O(\xi^2 + \eta^2),\;
\xi=x_1y_1;\quad \eta =\frac{x^2_2+y^2_2}{2}.
$$
\end{theorem}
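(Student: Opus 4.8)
The plan is to construct the Moser normal form for a saddle-center equilibrium of an analytic Hamiltonian system via a sequence of analytic symplectic transformations, following the classical Birkhoff-Moser normalization scheme. The key insight is that the linearization at $p$ has eigenvalues $\pm\lambda$ (real, hyperbolic pair) and $\pm i\omega$ (imaginary pair), so the linear part splits into a hyperbolic saddle block and an elliptic center block. The target is to show that $H$ depends, up to analytic change of coordinates, only on the two invariant combinations $\xi = x_1 y_1$ and $\eta = (x_2^2 + y_2^2)/2$, which are the natural "action-like" quantities for the saddle and center blocks respectively.

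**First I would** bring the quadratic part of $H$ into its normal form by a linear symplectic change of coordinates. Since $p$ is a saddle-center, the Hamiltonian matrix at $p$ has the required eigenvalue configuration, and a standard linear symplectic diagonalization puts the quadratic part into the form $\lambda x_1 y_1 + \omega (x_2^2 + y_2^2)/2$; this is where the coordinates $(x_1, y_1, x_2, y_2)$ with $\Omega = dx_1\wedge dy_1 + dx_2\wedge dy_2$ originate. The next step is the inductive Birkhoff normalization: at each order $k \geq 3$, I would seek a homogeneous polynomial generating function (or equivalently a homogeneous Hamiltonian $F_k$) whose time-one flow kills all terms of degree $k$ that do not Poisson-commute with the quadratic part $H_2 = \lambda\xi + \omega\eta$. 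The homological equation $\{H_2, F_k\} = H_k^{\mathrm{res}} - H_k$ is solved by expanding in the monomial basis: a monomial $x_1^{a} y_1^{b} x_2^{c} y_2^{d}$ is an eigenvector of $\operatorname{ad}_{H_2} = \{H_2, \cdot\}$ with eigenvalue $\lambda(b - a) + i\omega(\ldots)$, and the only terms in the kernel are precisely those that can be written as functions of $\xi$ and $\eta$. Hence each non-resonant term can be removed, and the surviving resonant terms assemble into a function $R(\xi,\eta) = O(\xi^2 + \eta^2)$.

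**The hard part will be** proving convergence of the formal normalizing transformation in the analytic category, rather than merely its formal existence. The formal scheme removes terms order by order, but the small-divisor structure introduced by the pure imaginary eigenvalues $\pm i\omega$ threatens convergence; the saving feature here is that we only normalize with respect to the single rotational action $\eta$ (not a full torus), so the relevant divisors $\lambda(b-a) + i\omega(c-d)$ for removable terms are bounded away from zero in a way that admits Moser's majorant estimates. I would therefore invoke Moser's theorem on the analytic normal form for such equilibria directly (references \cite{M,M0}), whose proof controls the transformation on a sequence of shrinking complex neighborhoods and establishes geometric convergence of the composed analytic symplectomorphisms on a definite polydisk. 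The output is the claimed analytic symplectic chart on a neighborhood $U$ of $p$.

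**Finally I would** record that the resulting Hamiltonian $H = h(\xi,\eta)$ is an analytic function of the two invariants with $h(\xi,\eta) = \lambda\xi + \omega\eta + R(\xi,\eta)$ and $R = O(\xi^2+\eta^2)$, which is exactly the assertion. As a consistency check, one verifies directly that in these coordinates the center manifold $W^c_p = \{x_1 = y_1 = 0\}$ carries the Lyapunov family (level sets of $\eta$) and that the stable and unstable manifolds $W^s = \{y_1 = x_2 = y_2 = 0\}$, $W^u = \{x_1 = x_2 = y_2 = 0\}$ are recovered as the coordinate axes, matching the geometric description of the saddle-center given earlier in the text. The only subtlety to flag is that the analyticity of $R$ is inherited solely from Moser's convergence result, so the entire weight of the theorem rests on that convergence step.
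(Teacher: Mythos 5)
Your proposal takes essentially the same route as the paper: the paper offers no self-contained proof of this theorem, but obtains it by citing Moser's analytic normal form results \cite{M,M0}, which is exactly the reduction you make after sketching the formal Birkhoff scheme. Your formal analysis is sound: for a saddle-center the divisors $\lambda(b-a)+i\omega(c-d)$ vanish only when $a=b$ and $c=d$, so the resonant module is generated by $\xi$ and $\eta$, and every nonzero divisor is bounded below by $\min(|\lambda|,|\omega|)$ --- there are genuinely no small divisors, and you are right that convergence of the normalization is the only serious issue.

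One concrete caveat, however. You attribute the symplecticity of the limiting coordinates to Moser's convergence theorem alone (``geometric convergence of the composed analytic symplectomorphisms''). As the paper explicitly notes right after the theorem, Moser's result yields an analytic but not automatically canonical normalizing transformation; to obtain a \emph{symplectic} change of variables --- which is what the theorem asserts, and what the rest of the paper relies on (symplectic Poincar\'e maps, area-preserving global maps) --- one must additionally invoke R\"ussmann's theorem \cite{Russmann1}. Your order-by-order generating-function scheme is symplectic at each step but only formal; the convergence of that \emph{canonical} scheme is precisely R\"ussmann's contribution, not a corollary of \cite{M}. So your argument is correct in outline and coincides with the paper's, but the ``invoke Moser directly'' step should be replaced by ``invoke Moser together with R\"ussmann,'' since the reference you lean on does not by itself deliver the symplectic chart claimed.
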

To get a symplectic change of coordinates in this theorem, except for results of \cite{M} one needs
to use R\"ussmann's paper \cite{Russmann1}.
\begin{remark}
By a linear scaling of time and, if necessary, a canonical transformation
$y_1\to x_1, x_1\to -y_1$, one can obtain  $\lambda = -1$, and $\omega > 0$ (new $\omega$ is
up to the sign the ratio $\textbar\omega/\lambda\textbar$).
Later on we utilize this normalization.
\end{remark}

Denote $\Phi^t:m\to\Phi^t(m)$ the flow generated by the vector field $X_H$.
In the coordinates of Theorem \ref{t1} the system of differential equations is written down as
\begin{equation}\label{Mo1}
\dot{x_1}=-h_{\xi}x_1,\quad \dot{y_1}=h_{\xi}y_1, \quad\dot{x_2}=-h_{\eta}y_2,
\quad \dot{y_2}=h_{\eta}x_2,
\end{equation}
and its flow $\Phi^t$ is
\begin{equation}
\begin{pmatrix}
x_1(t)\\
y_1(t)\\
x_2(t)\\
y_2(t)
\end{pmatrix}
=
\begin{pmatrix}
\exp[-t\cdot h^0_{\xi}] & 0 & 0 &0\\
0 & \exp[t\cdot h^0_{\xi}] & 0 &0\\
0 & 0 & \cos(t\cdot h^0_{\eta}) & -\sin(t\cdot h^0_{\eta})\\
0  & 0 &\sin(t\cdot h^0_{\eta})& \cos(t\cdot h^0_{\eta})
\end{pmatrix}
\begin{pmatrix}
x_1^0\\
y_1^0\\
x_2^0\\
y_2^0
\end{pmatrix},
\end{equation}
where notations are used
$$h^0_{\xi}=h_{\xi}(\xi_0,\eta_0),\quad h^0_{\eta}=h_{\eta}(\xi_0,\eta_0),\quad \xi_0 =
x^0_1y^0_1,\quad \eta_0=((x^0_2)^2+(y^0_2)^2)/2.
$$

The Hamiltonian system under consideration is reversible as well, hence it is important
to understand to which simplest form can be reduced by means of the same symplectic
coordinate change both the system and the involution in a neighborhood of a saddle-center.
This was done in \cite{GR}. We remind the needed results.
\begin{theorem}
Let $X_H$ be an analytic Hamiltonian vector field and $p$ its equilibrium of the saddle-center
type. Suppose, in addition, $X_H$ be reversible w.r.t. the analytic anti-canonical involution $L$
and $p$ is symmetric. Then in some neighborhood $U$ of $p$ there are analytic
coordinates, as in the Theorem \ref{t1}, such that $L$ has one of two forms:
$$
\begin{pmatrix}
x_1\\
y_1\\
x_2\\
y_2
\end{pmatrix}
\to
\begin{pmatrix}
0 & 1 & 0 & 0\\
1 & 0 & 0 & 0\\
0 & 0 & -1 & 0\\
0 & 0 & 0 & 1
\end{pmatrix}
\begin{pmatrix}
x_1\\
y_1\\
x_2\\
y_2
\end{pmatrix}
$$
or
$$
\begin{pmatrix}
x_1\\
y_1\\
x_2\\
y_2
\end{pmatrix}
\to
\begin{pmatrix}
0 & -1 & 0 & 0\\
-1 & 0 & 0 & 0\\
0 & 0 & -1 & 0\\
0 & 0 & 0 & 1
\end{pmatrix}
\begin{pmatrix}
x_1\\
y_1\\
x_2\\
y_2
\end{pmatrix}
$$
\end{theorem}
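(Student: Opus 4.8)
The plan is to separate the problem into a linear (infinitesimal) classification of the involution and a subsequent nonlinear normalization carried out \emph{inside} the Moser coordinates of Theorem \ref{t1}.

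First I would linearize at $p$. Writing $A=DX_H(p)$ and $B=DL(p)$ and differentiating the defining identities $L^2=\mathrm{id}$ and $DL(X_H)=-X_H\circ L$ at the fixed point gives $B^2=I$ and $BA=-AB$; differentiating $L^*\Omega=-\Omega$ shows $B$ is linearly anti-canonical. Since $A$ has the saddle-center spectrum $\{\pm\lambda,\pm i\omega\}$ and $B$ anticommutes with $A$, the map $B$ sends each $\mu$-eigenspace of $A$ to the $(-\mu)$-eigenspace; hence $B$ interchanges the one-dimensional stable and unstable eigenspaces and acts on the two-dimensional center eigenspace as a linear involution anticommuting with the infinitesimal rotation, i.e. as a reflection. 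On the hyperbolic block an involution exchanging the axes has the form $(x_1,y_1)\mapsto(\alpha y_1,\alpha^{-1}x_1)$, and $B^*\Omega=-\Omega$ then holds automatically; a real symplectic rescaling $(x_1,y_1)\mapsto(cx_1,c^{-1}y_1)$ changes $\alpha$ into $\alpha c^2$, so the \emph{sign} of $\alpha$ is a genuine invariant, giving precisely $\alpha=+1$ or $\alpha=-1$. On the center block the reflection is unique up to a rotation (which preserves $\eta$ and commutes with the flow), so it can be fixed to $(x_2,y_2)\mapsto(-x_2,y_2)$. This produces exactly the two candidate linear forms in the statement.

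Next I would pass to the coordinates of Theorem \ref{t1}, in which $H=h(\xi,\eta)$ and the flow $\Phi^t$ is the explicit block of hyperbolic scaling in $(x_1,y_1)$ times rotation in $(x_2,y_2)$; after one linear symplectic change preserving this normal form I may assume $DL(p)=B$. Because $\xi$ and $\eta$ are integrals and $\Phi^t\circ L=L\circ\Phi^{-t}$, the functions $\xi\circ L$ and $\eta\circ L$ are again $\Phi^t$-invariant; as the hyperbolic factor forces every analytic flow-invariant to depend only on $(\xi,\eta)$, one gets $\xi\circ L=P(\xi,\eta)$, $\eta\circ L=Q(\xi,\eta)$ with identity linear part and $h(P,Q)=h(\xi,\eta)$. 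Reversing the explicit flow then pins down the shape of $L$: on the center block it is a reflection post-composed with a rotation through some angle $\theta(\xi,\eta)$, and on the hyperbolic block it is $(x_1,y_1)\mapsto(a(\xi,\eta)\,y_1,a(\xi,\eta)^{-1}x_1)$ with $a(0,0)=\alpha=\pm1$.

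Finally I would kill the remaining functional freedom using the symplectic transformations that commute with $\Phi^t$ and hence preserve $H=h(\xi,\eta)$: the hyperbolic rescalings $(x_1,y_1)\mapsto(b(\xi,\eta)x_1,b(\xi,\eta)^{-1}y_1)$ and the center rotations by $\phi(\xi,\eta)$. The rescaling turns $a$ into $a\,b^2$, so choosing $b=|a|^{-1/2}$ normalizes $a$ to the constant $\mathrm{sign}(\alpha)=\pm1$ (which cannot be flipped, since $b^2>0$), while a suitable $\phi$ removes $\theta$ and an analytic adjustment reduces $P,Q$ to $\xi,\eta$; this lands $L$ on one of the two linear forms. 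I expect the main obstacle to be this last, nonlinear step: one must show that the functions $a,\theta,P,Q,b,\phi$ are analytic and that the corrections can be performed without destroying the Moser form of $H$ -- equivalently, that the normalization of Theorem \ref{t1} can be run $L$-equivariantly. Analyticity and convergence here rely on the Moser--R\"ussmann machinery already invoked for Theorem \ref{t1}, and the absence of angle dependence (everything is a function of the integrals $\xi,\eta$) is what avoids small-divisor difficulties; this is the technical content carried out in \cite{GR}.
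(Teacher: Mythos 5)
You should know at the outset that the paper contains no proof of this theorem: it is stated as a quotation from Grotta Ragazzo \cite{GR} (``This was done in \cite{GR}. We remind the needed results.''), so there is no internal argument to compare against, and your proposal has to be judged as a reconstruction of the known proof scheme --- which it essentially is: a linear classification of $DL(p)$ followed by an $L$-equivariant normalization inside the Moser coordinates of Theorem \ref{t1}, with the analytic heavy lifting (equivariant Moser--R\"ussmann normalization) correctly attributed to \cite{GR}. Your linear step is complete and correct: $B=DL(p)$ satisfies $B^2=I$, $BA=-AB$ and is linearly anti-canonical, hence swaps the $\pm\lambda$ eigendirections and acts as a reflection on the center plane; the hyperbolic block $(x_1,y_1)\mapsto(\alpha y_1,\alpha^{-1}x_1)$ is automatically anti-symplectic; and since the only allowed hyperbolic-block changes are $(x_1,y_1)\mapsto(cx_1,c^{-1}y_1)$ (a swap of the axes would flip the sign of $\xi$ and destroy the normalization of $h$), only $\mathrm{sign}\,\alpha=\pm1$ survives --- exactly the dichotomy in the statement. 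Your observation that all coefficient functions depend only on the integrals $\xi,\eta$, so that no small divisors arise, is also the right explanation of why the scheme closes in the analytic category.

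Two soft spots in your nonlinear step need repair, though both are repairable. First, the normalizing transformations you invoke are not individually symplectic: a direct computation gives $d(bx_1)\wedge d(b^{-1}y_1)=dx_1\wedge dy_1-(\ln b)_\eta\,d\xi\wedge d\eta$, while the rotation by $\phi(\xi,\eta)$ contributes $\phi_\xi\,d\xi\wedge d\eta$ in the opposite slot; so the pair $(b,\phi)$ is symplectic only under the cross-condition $(\ln b)_\eta=-\phi_\xi$. Since normalizing $a$ pins $b=|a|^{-1/2}$ and killing $\theta$ pins $\phi=\theta/2$, you must verify this compatibility identity --- it is not free, and it is precisely where the anti-canonicity of $L$ itself (which imposes PDE relations between $a$ and $\theta$) enters. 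Second, and more substantively, your closing clause ``an analytic adjustment reduces $P,Q$ to $\xi,\eta$'' cannot work as stated: every transformation in your allowed group preserves $\xi$ and $\eta$, and for any such $C$ one has $\xi\circ(C^{-1}LC)=P(\xi,\eta)$ unchanged, so conjugation can never alter $P,Q$. Fortunately no adjustment is needed, because $P=\xi$, $Q=\eta$ is forced: the induced map $(\xi,\eta)\mapsto(P,Q)$ is an analytic involution with identity linear part preserving each local level curve of $h$ (since $H\circ L=H$ and $\nabla h\ne0$), and a monotone-increasing involution of a curve germ is the identity --- if it moved a point forward, its square could not be the identity. Note also that your own block form $x_1\circ L=a\,y_1$, $y_1\circ L=a^{-1}x_1$ already presupposes $\xi\circ L=\xi$, so as written that part of the argument was circular; with the level-curve lemma inserted, the proposal is correct in substance and follows the same route as the cited source.
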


\section{Local orbit structure near saddle-center}

Using Moser coordinates is the easiest way to describe the local topology of levels
$V_c$ and the orbit behavior on each level \cite{LU}. The system locally near $p$ takes the form
(\ref{Mo1}). There are two invariant symplectic disks: $x_1=y_1=0$ and $x_2=y_2=0$.
Quadratic functions $\xi = x_1y_1$, $\eta = (x_2^2 + y_2^2)/2$ are local integrals of the system.
Consider the momentum plane $(\xi,\eta)$ in a neighborhood of the origin $(0,0)$. The level $V_c$
of the Hamiltonian corresponds to the analytic curve $\xi = -c +\omega\eta + O(\eta^2+c^2)=
a_c(\eta),$ $0\le\eta\le \eta_*.$ For $c$ small enough these curves form an analytic foliation
of a neighborhood of the origin $(0,0).$ In fact, as $\eta \ge 0,$ one
needs to consider the rectangle $|\xi|\le \xi_0$, $0\le \eta \le \eta_*$ in the momentum plane.

Consider first the level $V_0$, then we get the curve $\xi = \omega\eta +
O(\eta^2)= a(\eta)$ on the momentum plane $(\xi,\eta),$ $0\le\eta\le \eta_*.$
To construct a neighborhood of the origin in $\R^4$ with coordinates
$(x_1,y_1,x_2,y_2)$ we choose four cross-sections $|x_1| = d,$
$|y_1| = d.$ In the manifold $M$ a neighborhood $U$ of the point $p$ in coordinates
$(x_1,y_1,x_2,y_2)$ can be thought as the direct product of two disks $(x_1,y_1,0,0)$
and $(0,0,x_2,y_2)$.

The local structure of $V_0$ is investigated via its foliation into invariant levels of integral
$\eta.$ At $\eta=0$ (the origin on the disk $(0,0,x_2,y_2)$) we have $\xi=a(0)=0$, i.e. we get
a ``cross'' on the disk $(x_1,y_1,0,0)$ (the union of two segments $y_1=0$ and $x_1=0$).
In $U$ the cross coincides with the union of local stable and unstable curves of the
saddle-center $p$. For $\eta > 0$ the value $\xi = a(\eta)$ is positive
and on the disk $(x_1,y_1,0,0)$ we get two pieces of the hyperbola $x_1y_1 = a(\eta),$ which lie
in the first and the third quadrants of the plane $(x_1,y_1,0,0)$, respectively. In $U$
each piece of the hyperbola is multiplied on the circle $x_2^2 + y_2^2 =
2\eta$ on the plane $(0,0,x_2,y_2)$. Varying $\eta$ from zero till $\eta_*$, we get in $U$
two solid cylinders which have a unique common point, the origin, i.e. the saddle-center itself.
Each solid cylinder contains the angle made up of two gluing semi-segments of the cross
($x_1\ge 0$, $y_ 1=0$ and $y_1\ge 0,$ $x_1=0$ for one cylinder and $x_1\le 0$, $y_1=0$
and $y_1\le 0,$ $x_1=0$ for another cylinder). Each such angle is is the topological limit,
as $\eta \to +0$, of cylinders $x_1y_1 = a(\eta),$ $x_2^2 + y_2^2 = 2\eta$, lying in the same
solid cylinder. In particular, each of two solid cylinders contains one half of the stable curve
(a stable separatrix) and one half of the unstable curve (unstable separtatrix) of the
saddle-center. At the fixed $\eta > 0$ each two-dimensional cylinder is
an invariant set and orbits on it go from one of two cross-sections
$|y_1|=d$ to another of two cross-sections $|x_1|=d$ (see. Fig.~\ref{fig:2}).
\begin{remark}
It is worth remarking the property that will be used below. In Moser coordinates
on the level $H=0$ the cross-sections for orbits in the solid cylinder, which is projected onto
the first quadrant of the plane $x_2=y_2=0$, are $y_1=d>0$ (for entering orbits) and
$x_1 = d$ (for outgoing orbits), but for the second solid cylinder, which is projected onto
the third quadrant, they are $y_1=-d$ (for entering orbits) and $x_1 = -d$ (for outgoing
orbits траекторий). This implies that for the case of the first type involution
symmetry permutes the orbit on $y_1>0,$ $x_1=x_2=y_2=0$ with that on $x_1>0,$
$y_1=x_2=y_2=0$ and orbit on $y_1<0,$ $x_1=x_2=y_2=0$ with that on $x_1<0,$
$y_1=x_2=y_2=0$. Hence the symmetry permutes cross-sections from the same solid
cylinder. For the case the second type of involution the symmetry permuts the orbit
on $y_1>0,$ $x_1=x_2=y_2=0$ with that on $x_1<0,$
$y_1=x_2=y_2=0$ and orbit on $y_1<0,$ $x_1=x_2=y_2=0$ with that on $x_1>0,$
$y_1=x_2=y_2=0$. Hence the symmetry permutes cross-sections from the different solid
cylinders.  This will be used below for the classification of heteroclinic
connections.
\end{remark}

A level $V_c$ as $c<0$ corresponds to the curve $\xi = -c + \omega\eta + O(\eta^2+c^2) =
a_c(\eta) > 0$ on the momentum plane $(\xi,\eta)$ for all $0\le\eta\le \eta_*$.
Therefore, the level $V_c$ as $c<0$ consists of two disconnected solid cylinders,
their projections onto the momentum plane form two curvilinear rectangles in the first
and third quadrants bounded by related segments
$|x_1| = d,$ $|y_1| = d,$ and pieces of hyperbolas $x_1y_1 =
a_c(0)=-c >0$ and $x_1y_1 = a_c(\eta_*)$. Each two-dimensional cylinder
$\eta = \eta_0$ is foliated by flow orbits going from one cross-section $|y_1| =
d$ to another one $|x_1| = d$ (see, Fig.~\ref{fig:3}).

For $c>0$ the situation is more complicated, since the curve $\xi = a_c(\eta)$ on the plane
$(\xi,\eta)$ for $0\le\eta\le \eta_*$ correspond to both positive
and negative values of $\xi$ (we consider $|c|$ small enough). Denote $\eta_c$ the unique
positive root of the equation $a_c(\eta)=0$. Then for $0\le\eta\le \eta_c$
pieces of hyperbola $x_1y_1 = a_c(\eta) < 0$ belong to the second and fourth quadrants
of the plane $(x_1,y_1,0,0)$, but for $\eta > \eta_c$ they belong to the first and third quadrants
of this plane. The topological type of the set $V_c$ is a connected sum of two
solid cylinders (i.e. balls). To see this let us consider the projection of $V_c$ on the plane
$(x_1,y_1,0,0)$, it lies inside of the quadrate $|x_1|\le d$, $|y_1|\le d$. let us cut
this set into two halves by the diagonal $y_1 = -x_1.$ Over this diagonal in $V_c$ a 2-dimensional
sphere $S$ is situated. Indeed, extreme points of the diagonal in the second and fourth quadrants
two points of $V_c$ correspond (for them $\eta=0$), but for any point of the diagonal
between extreme points in $V_c$ a circle lies since $\eta>0$ for such points. In particular,
over the point $(0,0)$ of the diagonal the circle $x_2^2+y_2^2 = 2\eta_c$ lies, it is
the Lyapunov periodic orbit $l_c$. Segments $x_1=0$ and, respectively, $y_1=0$ of the plane,
in $V_c$ correspond to stable and unstable manifolds of the periodic orbits.

Subset $V^+_c\subset V_c$ lying over the half-plane $x_1+y_1 > 0$ is composed of three parts.
One part corresponds to the values $\eta_c \le \eta \le \eta_*.$ For the
strict inequality we get a set being diffeomorphic to the direct product
of an open annulus and a segment. As $\eta \to \eta_c+0$ this set has as a topological limit
the set being the direct product of an angle in the plane $(x_1,y_1)$: $0\le x_1\le d$, $y_1=0$
and $0\le y_1\le d,$ $x_1=0$, and  a circle $x_2^2+y_2^2 = 2\eta_c$. Two other parts of
$V^+_c$ are two solid cylinders. These are those subsets in $V^+_c$ that
are projected into the second and third quadrants of the plane
$(x_1,y_1)$, they correspond to $\xi <0$. Every such a solid cylinder is foliated into two-dimensional
cylinders lying over pieces of hyperbola $\xi = a_c(\eta) < 0$, $0\le \eta <
\eta_c,$ $x_1+y_1>0,$ respectively in the second and fourth quadrants. One of the bounding
circles of this cylinder lies over the point on the diagonal (for each cylinder this point is own),
the second bounding circle lies over the point on the segment $y_1=d$ or $x_1=d$.
Solid cylinder projecting onto the second quadrant is glued with its lateral boundary
to the set lying over the first quadrant along the cylinder $x_1=0,\,y_1>0,$ $x_2^2+y_2^2 =
2\eta_c$, but the second solid cylinder which is projecting onto the fourth quadrant, is glued
by its lateral boundary to the set over the first quadrant along the cylinder $y_1=0,\,x_1>0,$
$x_2^2+y_2^2 = 2\eta_c$. Thus, the set is obtained that is homeomorphic to
the solid cylinder from which an inner ball with the boundary $S$ is cut.

Similarly, the second part $V^-_c$ of the set $V_c$ is obtained lying over half-plane $x_1+y_1 \le 0.$
The sets $V^-_c$ and $V^+_c$ are glued along the sphere $S$,
the gluing corresponds to the same points on the diagonal $y_1 = - x_1$.
Visually, this can be imagined in such a way that in each half (solid
cylinder) we cut out by an inner ball and glue the halves obtained along
the boundary of balls in accordance with their orientation. This is a
particular case of a connected sum of two manifolds. Topologically the set obtained
is homeomorphic to a spherical layer  $S^2\times I$. Since each level is
foliated into invariant cylinders $\eta = const,$ we get a complete picture
of the local orbit behavior near a saddle-center (see Fig.~\ref{fig:2}-\ref{fig:4}).

\begin{figure}[h]
	\centering
	\includegraphics[width=0.5\linewidth]{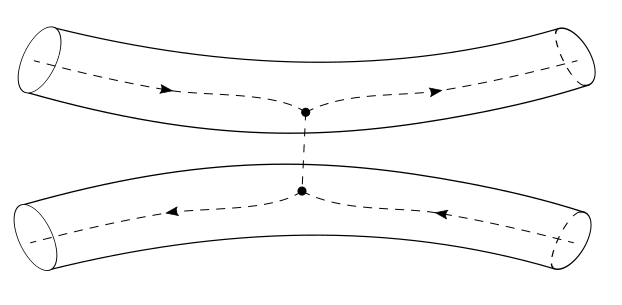}
	\caption{$c=0$}
\label{fig:2}
\end{figure}
\begin{figure}[h]
\centering
\includegraphics[width=0.5\linewidth]{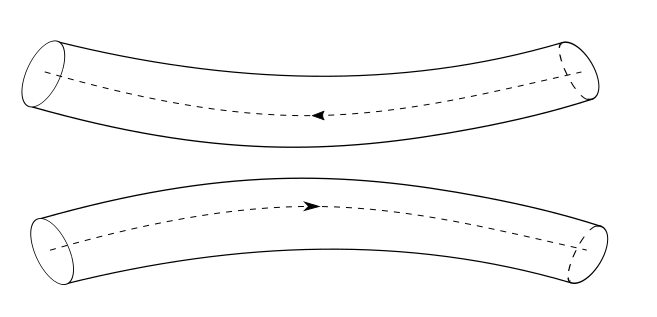}
\caption{$c<0$}
\label{fig:3}
\end{figure}
\begin{figure}[h!]
	\centering
	\includegraphics[width=0.5\linewidth]{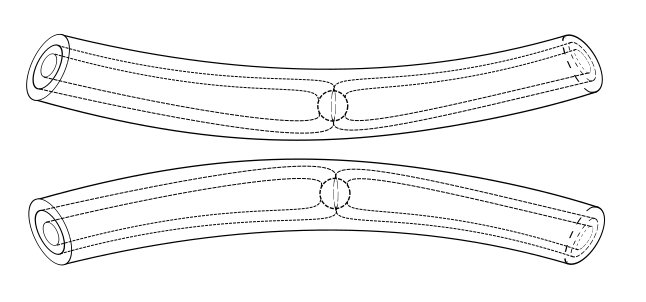}
	\caption{$c>0$}
\label{fig:4}
\end{figure}

\section{Poincar\'e map in a neighborhood of $\gamma$}

To describe the orbit behavior in a neighborhood of a periodic orbit $\gamma$ in the level
$V_0$, we consider a two-dimensional symplectic analytic Poincar\'e map generated by the flow
of $X_H$ on some analytic cross-section $\Sigma$ to $\gamma$. For the case under consideration,
the vector field is reversible, hence the cross-section can be chosen in such a way that
the reversibility would preserve for the Poincar\'e map, as well.

A symmetric periodic orbit intersects submanifold $Fix(L)$ at two points $m_1, m_2$. Take one of them,
$m = m_1,$ and consider a three-dimensional analytic cross-section
$N$ for $\gamma$ containing $m$. $N$ can be chosen in such a way that $m$ belongs to $N$
along with some sufficiently small analytic disk from $Fix(L)$ and $N$ is invariant
w.r.t. the action of $L$. We assume further such choice of the cross-section.

In a sufficiently small neighborhood of $m$ levels $V_c$ form an analytic foliation
into three-dimensional submanifolds since $dH_m \ne 0$. The level $V_0$
contains the curve $\gamma$, but $N$ is transversal to the curve, hence $V_0$ and $N$
intersect each other transversely at the point $m$ and therefore they intersect
in $M$ along an analytic 2-dimensional disk $\Sigma \subset N$. $\Sigma$ is a cross-section
to $\gamma$ in the level $V_0$ and we get an analytic Poincar\'e map $S: \Sigma \to
\Sigma$ with a saddle fixed point $m$.

To study orbit behavior of a system in a neighborhood of $\gamma$ we use
Moser theorem \cite{M0} on the normal form of a 2-dimensional analytic symplectic
diffeomorphism near its saddle fixed point. As $\gamma$ is orientable by the assumption, its
multipliers $\nu, \nu^{-1}$ are positive.
\begin{theorem}[Moser]
In a neighborhood of a saddle fixed point of a real analytic symplectic
diffeomorphism $S$ there are analytic symplectic coordinates $(u,v)$ and an analytic function
$f(\zeta),$ $\zeta = uv,$ $f(0)= \nu,$ such that $S$
takes the following form
\begin{equation}\label{per}
\bar{u}=u/f(\zeta), \bar{v}=vf(\zeta),\;  \mbox{\rm where}\; f(\zeta)= \nu + O(\zeta),\;
0 < \nu < 1.
\end{equation}
\end{theorem}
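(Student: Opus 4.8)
The plan is to normalize $S$ by a finite sequence of analytic symplectic changes of variables, reducing it to the stated form in which the product $\zeta=uv$ is invariant and the map acts as a $\zeta$-dependent hyperbolic dilation of the axes. First I would normalize the linear part: the linearization $DS(m)$ is a symplectic $2\times2$ matrix with real multipliers $\nu,\nu^{-1}$, $0<\nu<1$, so a linear symplectic change brings it to the diagonal form $(u,v)\mapsto(\nu^{-1}u,\nu v)$, with the unstable direction along the $u$-axis and the stable direction along the $v$-axis. Next I would straighten the invariant manifolds. Since $S$ is analytic and $m$ is hyperbolic, the local stable and unstable manifolds are analytic curves tangent to the axes; by an analytic symplectic change I put the unstable manifold on $\{v=0\}$ and the stable manifold on $\{u=0\}$. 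These become invariant, so $S$ acquires the form $\bar u=u\,U(u,v)$, $\bar v=v\,V(u,v)$ with $U(0,0)=\nu^{-1}$, $V(0,0)=\nu$; in particular each separatrix carries an analytic one-dimensional map fixing the origin with multiplier $\nu^{-1}$ (resp. $\nu$), which the subsequent normalization will linearize.

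The formal normal form is then obtained by killing non-resonant monomials order by order through symplectic (generating-function) changes. With linear part $(\nu^{-1}u,\nu v)$ a monomial $u^av^b$ scales by $\nu^{b-a}$, so in the $\bar u$-equation it is resonant precisely when $\nu^{b-a}=\nu^{-1}$, i.e. $a=b+1$, leaving the terms $u\,\zeta^{\,b}$; symmetrically the $\bar v$-equation retains $v\,\zeta^{\,a}$. Hence the formal normal form is $\bar u=u\,P(\zeta)$, $\bar v=v\,Q(\zeta)$ with $P(0)=\nu^{-1}$, $Q(0)=\nu$. Imposing area preservation, a direct Jacobian computation gives $PQ+\zeta(PQ)'=\tfrac{d}{d\zeta}\bigl(\zeta PQ\bigr)=1$, whence $\zeta PQ=\zeta$ and therefore $PQ\equiv1$. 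Writing $f:=Q$, so that $P=1/f$ and $f(0)=\nu$, yields exactly the announced form $\bar u=u/f(\zeta)$, $\bar v=vf(\zeta)$. It is worth stressing that \emph{no} small divisors occur: the divisors $\nu^{b-a+1}-1$ over the non-resonant lattice are bounded away from zero because $\nu$ is hyperbolic.

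The main obstacle is the convergence of the normalizing transformation, which is the substance of Moser's theorem. The difficulty is not small divisors but the infinitely many resonances forced by the symplectic relation $\nu\cdot\nu^{-1}=1$: these are what produce the whole functional parameter $f(\zeta)$, and a priori the conjugacy to the normal form could diverge (indeed, for general non-area-preserving hyperbolic maps with a multiplicative resonance $\mu_1\mu_2=1$ it may). The resolution, following Moser, uses area preservation decisively. I would establish convergence by majorant estimates on the successive generating functions, exploiting that the uniform lower bound on the divisors together with the symplectic constraint (which ties the $\bar u$- and $\bar v$-corrections at each step) keeps the radius of convergence of the accumulated change bounded below; equivalently, one constructs directly an analytic first integral $\zeta=uv+\cdots$ of $S$ and an analytic symplectic coordinate conjugate to it, after which invariance of $\zeta$ and symplecticity force the map into the stated shape. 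This convergence step is where essentially all the analytic work lies, the remaining reductions being formal and elementary.
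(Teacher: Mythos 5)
First, a point of comparison: the paper does not prove this statement at all --- it is quoted as Moser's theorem with a citation to \cite{M0}, so there is no in-paper argument to measure yours against, and your attempt must stand on its own as a proof of the classical result. Your formal reduction is correct and cleanly executed: the linear symplectic diagonalization with multipliers $\nu^{\pm 1}$; the straightening of the analytic stable and unstable curves by area-preserving shears; the resonance computation showing that the only resonant monomials are $u\zeta^b$ in the first component and $v\zeta^a$ in the second, so the formal normal form is $\bar u = uP(\zeta)$, $\bar v = vQ(\zeta)$; and the Jacobian identity $\det DS = PQ + \zeta (PQ)' = \frac{d}{d\zeta}\left(\zeta PQ\right) = 1$, which integrates to $PQ \equiv 1$ and yields $f := Q$, $f(0)=\nu$. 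You are also right that the divisors $\nu^{k}-1$, $k\ne 0$, are uniformly bounded away from zero, so the obstruction is not small divisors but the infinite string of resonances $a = b+1$ forced by $\nu\cdot\nu^{-1}=1$; and your side remark that the analogous normalization can diverge for non-area-preserving resonant hyperbolic maps is consistent with the known picture.

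The gap is the convergence step, and it is not a minor one: it is the entire content of Moser's theorem, and your proposal only names the tools (``majorant estimates on the successive generating functions,'' or alternatively ``construct directly an analytic first integral $\zeta = uv + \cdots$'') without executing either. The assertion that the uniform lower bound on the divisors together with the symplectic constraint ``keeps the radius of convergence of the accumulated change bounded below'' is precisely what must be proved, and it is delicate: a naive order-by-order normalization loses a factor of the domain at every step, and the resonant datum $f$ is itself an infinite series whose convergence is part of the conclusion, not a given. Moreover, since divergence genuinely occurs in the non-symplectic resonant case, any correct argument must use area preservation quantitatively inside the iteration estimates, not merely to pin down the shape of the normal form via $PQ\equiv 1$ (which is the only place your sketch invokes it). As written, then, your proposal is a faithful and accurate road map of Moser's strategy rather than a proof; closing it requires carrying out the majorant scheme of \cite{M0} (or an equivalent modern argument, e.g.\ analytic interpolation of $S$ by a Hamiltonian flow followed by normalization of the Hamiltonian), which is a substantial piece of analysis that the sketch does not supply.
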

For our case $S$ is also reversible w.r.t. the restriction of the involution on $\Sigma$,
and involution permutes stable and unstable manifolds (here -- curves) of
the fixed point $m.$ As $X_H(m)\ne 0$, then intersection of $Fix(L)$ and $V_0$ is transverse
at $m$ and it is an analytic curve $l\subset \Sigma$ being the symmetry line
containing $m$. It is not hard to prove, following \cite{Bruno} that symplectic coordinates
$(u,v)$ in the Moser theorem can be chosen in such a way that the restriction of the involution
on $\Sigma$ would act as $(u,v)\to (v,u).$ Then the fixed point set of the involution near point
$m$ coincides with the diagonal $u=v.$ We assume henceforth this to hold.

The orbit $\Gamma_1$ is nonsymmetric and approaches $\gamma$ as $t\to -\infty,$
hence it intersects $\Sigma$ at a countable set of points tending to $m$, but not lying on $l$.
These points belong to the analytic curve $w_u$ being the trace on the disk $\Sigma$
of manifold $W^u(\gamma)$. Similarly, the orbit $\Gamma_2$ intersects $\Sigma$
at countable set of points approaching $m$ at positive iterations of $S$, the points
do not lie on $l$. These points belong to the analytic curve $w_s$ being the trace
of the manifold $W^s(\gamma)$ on $\Sigma$. In Moser coordinates local stable curve
coincides with the axis $v$ (it is given as $u=0$), and local unstable one with the axis $u$ ($v=0$).
Therefore the point $p_s$, the trace of $\Gamma_2$, has coordinates $(0,v_+)$, and
$p_u = L(p_s)$ is the trace of $\Gamma_1$ and has coordinates $(u_-,0)$. To be definite,
we assume $v_+ > 0.$ Then, due to reversibility, one has $u_- = v_+ $.

We choose neighborhoods $\Pi^s,\Pi^u$ of point $p_s, p_u$ defined by inequalities
$\Pi^s: |v-v_+| <\varepsilon,\; |u|<\delta$, и $\Pi^u : |v| < \delta,\;
|u-u_-|<\varepsilon$, the quantities $\delta,\varepsilon$ are small
enough. The set of points from $\Pi^s$ which are transformed to $\Pi^u$ by
some iteration of the map $S$, as is known \cite{Smale,SHil}, consists of
the countable set of strips in $\Pi^s$ accumulating to the stable curve $u=0.$
Due to a convenient normal form, these strips are easily found.
The following assertion holds
\begin{lemma}
Equations $u=f^k(\zeta)(u_- \pm\varepsilon)$ define functions $u=
s^{\pm}_k(v)$, whose domain is $|v-v_+|<\varepsilon$. For them
inequalities $s^{+}_k(v)> s^{-}_k(v)$ hold true
$s^{-}_k(v)> s^{+}_{k+1}(v),$ and $s^{+}_k(v)$ are uniformly tend
to zero as $k\to \infty.$
\end{lemma}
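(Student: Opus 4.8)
The plan is to exploit that $\zeta=uv$ is an invariant of $S$. Indeed, from (\ref{per}) one has $\bar u\bar v=(u/f(\zeta))(vf(\zeta))=uv$, so $f(\zeta)$ is constant along an orbit and the $k$-th iterate is simply
\[
S^k(u,v)=\bigl(u\,f(\zeta)^{-k},\,v\,f(\zeta)^{k}\bigr),\qquad \zeta=uv.
\]
Hence a point $(u,v)\in\Pi^s$ is carried into $\Pi^u$ by $S^k$ precisely when its image $u$-coordinate $u f(\zeta)^{-k}$ lies in $(u_--\varepsilon,u_-+\varepsilon)$; the image $v$-coordinate $v f(\zeta)^{k}$ is then automatically $O(\nu^{k})$, hence smaller than $\delta$ for $k$ large, so it imposes no extra restriction. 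The two boundaries $u f(\zeta)^{-k}=u_-\pm\varepsilon$ of this condition are exactly the equations $u=f(\zeta)^{k}(u_-\pm\varepsilon)$ of the statement, which I read as implicit equations $u=f(uv)^{k}(u_-\pm\varepsilon)$ for the unknown $u$ with $v$ a parameter in $|v-v_+|<\varepsilon$.

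First I would solve these implicit equations. Fix the sign, write $c=u_-\pm\varepsilon$ (taking $\varepsilon<u_-$ so that $c>0$), and regard $T(u)=f(uv)^{k}c$ as a self-map of a small interval about $\nu^{k}c$. Since $f(0)=\nu\in(0,1)$ and $f(\zeta)=\nu+O(\zeta)$, on that interval $\zeta=uv=O(\nu^{k})>0$ and $T'(u)=k\,f(uv)^{k-1}f'(uv)\,v\,c=O(k\nu^{k-1})\to0$; thus for $k\ge k_0$ the map $T$ is a contraction carrying the interval into itself, and the contraction principle together with the analytic implicit function theorem, whose nondegeneracy $\partial_u[\,u-f(uv)^kc\,]=1-O(k\nu^{k-1})\ne0$ holds for the same reason, yields a unique solution $u=s_k^{\pm}(v)$, analytic in $v$ on $|v-v_+|<\varepsilon$. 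The key quantitative output is the uniform asymptotics: because $\zeta=s_k^{\pm}(v)\,v=O(\nu^{k})$ one has $f(\zeta)^{k}=\nu^{k}\bigl(1+O(k\nu^{k-1})\bigr)$, so
\[
s_k^{\pm}(v)=(u_-\pm\varepsilon)\,\nu^{k}\bigl(1+o(1)\bigr)\quad\text{uniformly in }v,
\]
which in particular gives $s_k^{+}(v)\to0$ uniformly as $k\to\infty$, the last assertion of the lemma.

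The two inequalities follow from this. For $s_k^{+}(v)>s_k^{-}(v)$ I would use that both solve the same equation with constants $u_-+\varepsilon>u_--\varepsilon$; implicit differentiation in $c$ gives $\partial_c s=f(uv)^{k}\bigl(1-k f^{k-1}f'vc\bigr)^{-1}>0$, the denominator being $\approx1$, so the solution is strictly increasing in $c$ and $s_k^{+}>s_k^{-}$. For the separation $s_k^{-}(v)>s_{k+1}^{+}(v)$ I would compare the asymptotics: the ratio obeys
\[
\frac{s_{k+1}^{+}(v)}{s_k^{-}(v)}=\nu\,\frac{u_-+\varepsilon}{u_--\varepsilon}\,\bigl(1+o(1)\bigr),
\]
and since $\nu(u_-+\varepsilon)/(u_--\varepsilon)\to\nu<1$ as $\varepsilon\to0$, fixing $\varepsilon<u_-(1-\nu)/(1+\nu)$ makes the leading factor strictly $<1$; the uniform $o(1)$ then forces the ratio below $1$ for all $k\ge k_0$, i.e. $s_{k+1}^{+}<s_k^{-}$. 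The strips $\{\,s_k^{-}(v)\le u\le s_k^{+}(v)\,\}$ are thereby disjoint, ordered by $k$, and accumulate on the stable curve $u=0$.

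The routine parts are the contraction/IFT existence and the monotonicity in $c$; the one point needing care, and the real content, is the \emph{uniform} control of $f(uv)^{k}$ when the argument $uv$ is itself of order $\nu^{k}$, which is what turns the naive leading-order estimates into genuine uniform-in-$k$, uniform-in-$v$ inequalities and which dictates the smallness of $\varepsilon$ (relative to $u_-$ and $\nu$) needed for the non-overlapping of the strips.
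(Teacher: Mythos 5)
Your proof is correct, and it is a genuinely different (more hands-on) implementation of the same underlying idea. The paper also exploits the invariance of $\zeta=uv$, but instead of contracting on $u$ it multiplies the equation $u=f^k(\zeta)(u_-\pm\eps)$ by $v$ to obtain a scalar equation in the conserved quantity alone, $g_k(\zeta)=\zeta/f^k(\zeta)=v(u_-\pm\eps)$, and then inverts the family $g_k$ via the \emph{complex} inverse function theorem (using $g_k(0)=0$, $g_k'(0)=\nu^{-k}$, so that all the inverses are defined on one fixed disk $|\zeta|\le\sigma$), getting $s_k^{\pm}(v)=g_k^{-1}(v(u_-\pm\eps))/v$ and uniform convergence in one stroke. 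Your route --- a contraction/IFT argument directly in $u$ with the explicit derivative bound $1-O\bigl(kf^{k-1}\bigr)\ne 0$ --- buys elementarity (no complexification) and, more importantly, explicit asymptotics $s_k^{\pm}(v)=(u_-\pm\eps)\nu^k(1+o(1))$ uniformly in $v$, from which you actually derive the two ordering inequalities: monotonicity in $c=u_-\pm\eps$ for $s_k^+>s_k^-$, and the ratio estimate $s_{k+1}^+/s_k^-\approx\nu(u_-+\eps)/(u_--\eps)<1$ for the separation of consecutive strips. The paper declares these inequalities ``evident'' and proves only existence and uniform smallness, so your write-up is in this respect more complete; note also that your smallness requirement $\eps<u_-(1-\nu)/(1+\nu)$ is exactly of the type the paper imposes separately (its conditions $\delta<r(1-\nu)/(1+\nu)$, $\delta<r-\eps$ guaranteeing $\Pi^s\cap S(\Pi^s)=\emptyset$ and $\Pi^u\cap S^{-1}(\Pi^u)=\emptyset$, which yields disjointness of the strips geometrically), so making it explicit inside the lemma is legitimate and consistent with the paper's standing assumption that $\eps,\delta$ are small enough. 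One cosmetic caveat: your bound $T'(u)=O(k\nu^{k-1})$ uses $f\approx\nu$ near the solution; for uniqueness over the whole range $|u|<\delta$ you should use the uniform bound $|f|\le(1+\nu)/2<1$ there, exactly as the paper does, but this changes nothing in substance.
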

\begin{proof} The proof of this lemma is evident. The lateral boundaries
of strips $\sigma_k^s$ are segments $|v-v_+|=\pm \eps.$ Its upper boundary serves
the curve $s^{+}_k(v)$ providing by solution of the equation $u_k = u_- + \eps$,
and lower boundary is the curve being the solution of the equation $u_k = u_- - \eps$.
To prove the lemma we take an arbitrary $v,$ $|v-v_+|\le \eps$ and find the related
values $u=s^{+}_k(v)$ and $u=s^{-}_k(v)$ from the equations:
$$
u=f^k(\zeta)(u_-+\eps),\;u=f^k(\zeta)(u_--\eps).
$$
Consider, for example, the first equation. Since the value of $\zeta$ preserves
along the orbit of $S$, then multiplying both sides of the first equation
at $v$, we get $g_k(\zeta)=\zeta/f^k(\zeta)= v(u_-+\eps)$. For $k\ge k_0 > 0$
this sequence of functions complex functions has each the inverse one and
all of them are defined in the same disk $|\zeta|\le \sigma$ of the complex plane
$\mathbb C.$ This follows from the complex inverse function theorem, sinc
$g_k(0)=0,$ $g_k'(0)= \nu^{-k}.$ Thus we get $s^+_k(v)= g^{-1}(v(u_-+\eps))/v$ and
$s^+_k(v)\to 0$ as $k\to \infty$ uniformly in $v$.\end{proof}

Functions $s^{\pm}_k(v)$ are upper and lower boundaries of the strip $\sigma_k^s.$
It implies the existence of a countable set of such strips
$$
k>k_0=
E\left\{\frac{ln((\varepsilon+u_-)/\delta)}{ln(\nu^{-1})}\right\}.
$$
Here it is assumed $\varepsilon+u_- > \delta$ (this is the first restriction on the
quantities $\varepsilon,\delta$).

The restriction of $L$ on $\Sigma$ acts as $L:(u,v) \to (v,u)$, hence we get $u_- = v_+ = r$.
Thus we have the same condition on $k_0$ for strips $\sigma^u_k$:
$$k>k_0=E\left\{ \frac{ln((\varepsilon+r)/{\delta}}{ln(\nu^{-1})}\right\}.$$

One more restriction on these quantities provides the requirement that
the neighborhood $\Pi^s$ would not intersect with its image $S(\Pi^s)$, and
$\Pi^u$ with its pre-image $S^{-1}(\Pi^u)$, and $\Pi^s\cap\Pi^u=\emptyset$.
These conditions lead to the inequalities:
$$
\delta< r\frac{1-\nu}{1+\nu},\quad \delta< r-\varepsilon.
$$
Now we can assert, due to construction, that all orbits of the map $S$,
which pass through the points of the neighborhood $\Pi^s$ and reach
the neighborhood $\Pi^u$ for positive iterations, have to pass through
one of strips $\sigma^s_k, k\ge k_0$. these strips has, as its topological limit,
the segment $u=0$ in $\Pi^s$. Its points belong to the stable manifold and
they tend under $S^n$ to the fixed point $m$ as $n\to \infty.$

From the reversibility of $S$ and the same considerations we get that
images of strips $\sigma^s_k$ in $\Pi^u$, i.e. strips $\sigma^u_k$,
accumulate as $k\to \infty$ to the points of the segment $v=0$ in $\Pi^u$.
These points under negative iterations of $S$ tend to $m$.

\begin{remark}
It is worth remarking the useful fact. At a given small value of $c$ the
Hamiltonian system in a neighborhood of $\gamma$ has an analytic invariant 3-dimensional
submanifold $V_c = \{H=c\}$. In $V_c$ a saddle periodic orbit $\gamma_c$ lies being a continuation
in $c$ of the orbit $\gamma$. The family $\gamma_c$ makes up an analytic
2-dimensional local symplectic cylinder containing $\gamma_0 = \gamma$.
Chosen above 3-dimensional cross-section $N$ under its intersection with $V_c$ gives
an analytic 2-dimensional symplectic disk $\Sigma_c$ being the local cross-section
for the restriction of the flow on $V_c$. The Poincar\'e map $S_c$ on $\Sigma_c$
is symplectic analytic with the saddle fixed point $m_c$. For this map
Moser theorem as also valid and its can be transformed to the form
(\ref{per}). Moreover, since the dependence on $c$ is analytic the change of
variables can be done for all $c$ small enough at once and in (\ref{per})
function $f$ will depend on $c$ analytically. This will be used below to study
the dynamics on $V_c$ for $c$ close to $c =0$.
\end{remark}

\section{Global maps}

Now we derive the representations of the global maps $T_1$ generated by the flow near
$\Gamma_1$ acting as $T_1:\Pi^u \to D_1$. It is analytic symplectic diffeomorphism
and is written as $x_2 = f(u,v),\quad y_2 = g(u,v)$, here symplecticity is equivalent to
the identity $f_u g_v - f_vg_u \equiv 1$ (area preservation).

Linearization of this map at the point $(u_-,0)$ has the form $x_2=\alpha (u-u_-)+\beta
v,$ $y_2=\gamma (u-u_-)+\delta v$, where $\alpha=f_u,\quad  \beta=f_v,\quad \gamma=g_u, \quad
\delta=g_v$, all derivatives are calculated at the point $(u_-,0)$. Thus, a
general form of $T_1$ is
$$ x_2=\alpha (u-u_-)+\beta v +\cdots, \quad  y_2=\gamma (u-u_-)+\delta v + \cdots.$$

The system under study is reversible and cross-sections are chosen consistently with the action
of involution, then the global map $T_2: D_2 \to \Pi^s$ near
$\Gamma_2 = L(\Gamma_1)$ is expressed as $T_2=L\circ T_1^{-1}\circ L$, or in coordinates:
\begin{eqnarray}
u_1=\gamma\bar{x}_2 + \alpha\bar{y}_2 + \cdots, \nonumber \\
v_1-v_+ = -\delta\bar{x}_2 - \beta\bar{y}_2 + \cdots.
\nonumber
\end{eqnarray}

Below, when studying of the orbit behavior on the levels $V_c$ for $c\ne 0$, we shall need
to know the form of the global maps in these cases. As was mentioned
above, without loss of generality, we can regard as coordinates on the disks
$D_1(c), D_2(c)$ symplectic coordinates $(x_2,y_2)$ and $(\bar{x}_2,\bar{y}_2)$,
respectively, and on the disk $\Sigma(c)$ symplectic coordinates $(u,v)$.
Global maps are analytic symplectic diffeomorphisms analytically depending
on $c.$ Thus they have the form
\begin{equation}\label{gloc}
\begin{array}{l}
T_1(c):\;x_2=a(c)+\alpha(c) (u-u_-)+\beta(c) v +\cdots, \;y_2= b(c)+\gamma(c) (u-u_-)+
\delta(c)v +
 \cdots \\\\
T_2(c):\;u_1=a_1(c)+\gamma(c)\bar{x}_2 + \alpha(c)\bar{y}_2 + \cdots, \;
v_1-v_+ = b_1(c) -\delta(c)\bar{x}_2 - \beta(c)\bar{y}_2 + \cdots,
\end{array}
\end{equation}
where $a_1(c) = \gamma(c)a(c)-\alpha(c)b(c),\; b_1(c)= -\delta(c)a(c)+\beta(c)b(c).$

\section{Types of symmetric connections}

The symmetric periodic orbit $\gamma$ in the level $V_0$ is outside of a neighborhood
of point $p$, so one needs to conform the location of this orbit and its symmetry
with the action of involution in $U$ relative to coordinates. This concordance is
relied on the existence of connecting orbits $\Gamma_1$ and $\Gamma_2 = L(\Gamma_1)$.

Near the point $p$ involution $L$ permutes local stable and unstable curves of $p$.
Since heteroclinic orbits contain these curves, two cases are possible here.
To understand this we remind that we have chosen in $U$ on the level $V_0$
two smooth disks $D_1,$ $D_2 = L(D_1)$ being transverse to orbits $\Gamma_1$ and $\Gamma_2$,
respectively. In Moser coordinates near $p$ we can take as such disks cross-sections
$y_1=\pm d$ and $x_1=\pm d$, where the sign is determined by the intersection of
a respective cross-section with $\Gamma_1$ and $\Gamma_2$. On $V_0$
coordinates on the disks are $(x_2,y_2),$ since a coordinate conjugated to
$y_1$ (or, respectively, to $x_1$) is found from the equality
$H=0$. Recall  (see above) that in a neighborhood of point $p$ the local topological type of
the level $V_0$ is a pair of 3-dimensional solid cylinders with two their inner
points identified chosen by one in each cylinder (after gluing this is the
point $p$) (see Fig.~\ref{fig:2}). The lateral boundary of each solid cylinder
is a smooth 2-dimensional invariant cylinder, two other boundaries are two disks
(``lids''). For each solid cylinder orbits enter through one lid and leave
the cylinder through the other lid.

One can regard the cross-sections $D_1,$ $D_2$ be two lids of these cylinders
and what is more, $D_1$ is the entry disk and $D_2$ is exit one. Two different
situations are possible: 1) both orbits $\Gamma_1$, $\Gamma_2$ belong locally to the same
solid cylinder, this is equivalent to the conditions that both disks $D_1,$ $D_2$
are on the boundary of the same cylinder; 2) orbit $\Gamma_1$ belongs locally to
one solid cylinder but $\Gamma_2$ belongs locally to another solid cylinder, that is,
disks $D_1$ and $D_2$ belong to the boundaries of different cylinders.
In Moser coordinates the case 1 corresponds to the action $L:(x_1,y_1,x_2,y_2)\to$
$(y_1,x_1,-x_2,y_2)$, but the case 2 does to the action $L:(x_1,y_1,x_2,y_2)\to$
$(-y_1,-x_1,-x_2,y_2)$. The case 1 means the invariance of the related cylinder w.r.t.
the involution and the case 2 means its permutability (one cylinder transforms to another one).
In case if $L$ preserves the cylinder, the intersection of $Fix(L)$ with the cylinder is a
curve, but if $L$ permutes cylinders this intersection is the only point $p$
(see, Fig.~\ref{fig:5}-\ref{fig:6}).

Now consider those orbits of the vector field which enter through $D_1$ near $\Gamma_1$,
but differ from $\Gamma_1$. As $t$ increases, they enter into the related solid cylinder,
pass it and leave the cylinder (the semi-orbit $\Gamma_1$ itself tends to $p$ and stays in
the cylinder). These orbits either intersect $D_2$ (case 1), or leave the cylinder without
intersecting $D_2$ (case 2). In the case 2 the Poincar\'e map is not defined in a neighborhood of
the connection for $c \le 0$, since orbits close to $\Gamma_1$ do not return on $D_2$,
if a system under consideration does not fulfil some additional global conditions (of the type
the existence of a homoclinic orbit joining two remaining lids).
\begin{figure}[h]\label{cases}
	\centering
	\includegraphics[width=0.6\linewidth]{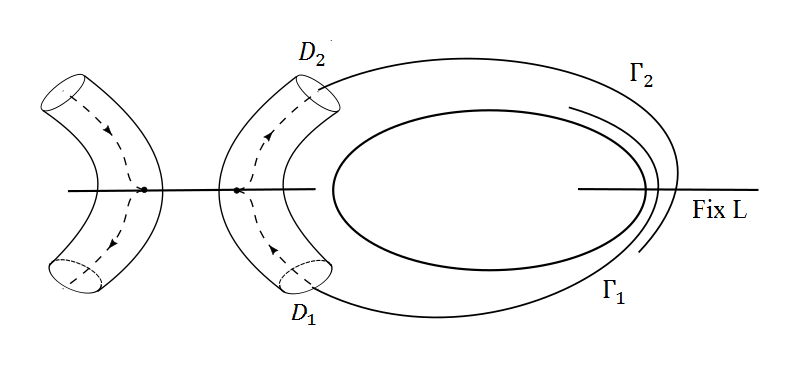}
	\caption{Case 1}
\label{fig:5}
\end{figure}
\begin{figure}[h]
	\centering
	\includegraphics[width=0.5\linewidth]{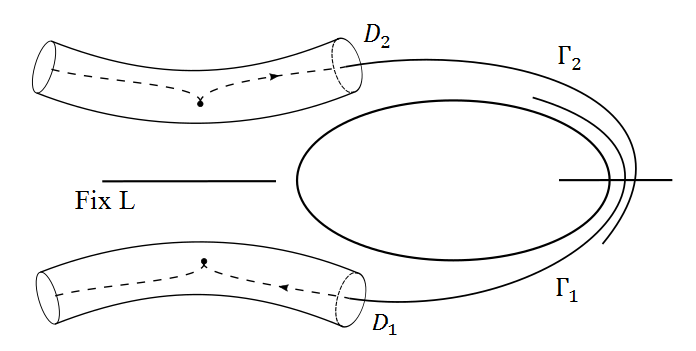}
	\caption{Case 2}
\label{fig:6}
\end{figure}

\begin{remark}
For $c > 0$ small enough for the case 2 the Poincar\'e map on the related disk $D_1(c)$
becomes defined inside of some small disk centered at $(0,0)$ whose boundary circle is the trace of
a stable manifold $W^s(l_c)$ of the Lyapunov periodic orbit $l_c$. Then
for some sequence of positive $c_n \to 0$ the trace on the disk $D_1(c_n)$ of
$W^s(\gamma_{c_n})$ is tangent to the trace of $W^u(\gamma_{c_n})$, what is accompanied
with the appearance of elliptic periodic orbits. This will be considered below.
\end{remark}

\section{In the singular level $V_0$}

We start with the case 1 at $c=0.$ In the neighborhood $U$ of $p$ we have in Moser coordinates
the representation $h=-\xi+\omega\eta+ R(\xi,\eta)$, hence the manifold $W^{cs}$ is given as
$x_1=0$, $W^{cu}$ as $y_1=0$, $W^s$ by the equalities $x_1=x_2=y_2=0$, and
$W^u$ by $y_1=x_2=y_2=0$. Suppose, to be definite, that in $U$ heteroclinic orbit $\Gamma_1$
approaches to $p$ for values $y_1>0$, i.e. in the level $V_0$ disk $D_1$ is defined by the equality
$y_1=d$ and disk $D_2$ by the equality $x_1 =d.$ In $U$ signs of variables
$x_1$ and $y_1$ preserve by the flow and 3-dimensional cross-section
$N^s: y_1 = d > 0, |x_1|\leq\delta, \eta\leq\eta_0,$ is transverse to $\Gamma_1$ and to all orbits
close to $\Gamma_1$, due to the inequality $h_{\xi}=-1+\dots \ne 0$ in $U$.
Similar assertions are valid for the cross-section $N^u = L(N^s): x_1=d, |y_1|\leq\delta,
\eta\leq\eta_0$. Each cross-section is foliated by levels $H=c$ into disks, one of which is
$D_1 = V_0\cap N^s$ and, respectively, $D_2 = V_0\cap N^u$.

Denote as $a(\eta)$ the solution of the equation $h(\xi,\eta)=0$ w.r.t. $\xi,\; \xi = a(\eta)=
\omega\eta + O(\eta^2)$. One may regard that when variables $(x,y)$ vary in $U\cap V_0$,
corresponding solutions of the equation $h(\xi,\eta)=0$ lie on the graph of function $a$.
Then 2-disk $D_1$ in $N^s$ is the graph of the function $x_1=a(\eta)/d$, and 2-disk $D_2$
in $N^u$ is the graph of function $y_1=a(\eta)/d$.
Both $D_1$, $D_2$ are analytic disks being symplectic w.r.t. the restriction of 2-form
$\Omega$ on $D_1$ and $D_2$, respectively, and the local map $T: D_1\to D_2$ generated
by the flow $\Phi^t$ is symplectic.
\begin{remark}
For the type 2 of the involution the cross-sections are $y_1=d$ (for $N^s$) and $x_2=-d$
(for $N^u$). Therefore, orbits from $D_1= N^s\cap V_0$ hit $D_2= N^u\cap
V_0$ only if $a(\eta)<0.$
\end{remark}
Let us find an explicit representation of the map $T$ in coordinates $(x_2,y_2)$.
The passage time $\tau$ for orbits from $N^s$ to $N^u$ is found from (\ref{Mo1}), where
$x_1(\tau)=d,$ $y_1(0)=d$: $\tau = -(h_\xi)^{-1}\ln(d/x_1), x_1 = a(\eta)/d$.
From (\ref{Mo1}) it follows that $T$ has the form
\begin{equation}\label{T}
\bar{x}_2 = x_2\cos\Delta(\eta)-y_2\sin\Delta(\eta), \quad \bar{y }_2 =
x_2\sin\Delta(\eta)+y_2\cos\Delta(\eta),
\end{equation}
with
\begin{equation}\label{Del}
\Delta(\eta)= -\frac{h_\eta}{h_\xi}\ln(d/x_1)= a'(\eta)\ln(d^2/a(\eta))=
(\omega + O(\eta))\ln(d^2/a(\eta)).
\end{equation}
\begin{remark}\label{rm:case2}
For the type 2 of the involution the formula is modified as $\Delta(\eta)=
a'(\eta)\ln(-d^2/a(\eta))$.
\end{remark}
Our first result is the following theorem
\begin{theorem}
If an analytic reversible Hamiltonian system has a heteroclinic connection  of the type 1
with properties indicated, then the saddle periodic orbit $\gamma$ has a countable set
of 1-round transverse homoclinic orbits. In the case of the type 2 connection no other
orbits exist in $V_0$ in a sufficiently small neighborhood of the connection, except
for orbits of the connection.
\end{theorem}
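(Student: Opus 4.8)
\section*{Proof proposal}

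The plan is to detect the $1$-round homoclinic orbits of $\gamma$ as transverse intersections, read off on the cross-section $\Sigma$, of the unstable trace $w_u$ carried once around the contour with the stable trace $w_s$. Concretely, I follow a small arc of $W^u(\gamma)$ issuing near $\Gamma_1$ through the three maps already constructed: the global map $T_1:\Pi^u\to D_1$ of (\ref{gloc}), the local passage $T:D_1\to D_2$ of (\ref{T})--(\ref{Del}), and the global map $T_2:D_2\to\Pi^s$, and I test where the resulting curve meets $w_s$ near the point $p_s=(0,v_+)$. Since the local stable and unstable curves of $m$ are the coordinate axes $u=0$ and $v=0$, and $p_u=(u_-,0)$, $p_s=(0,v_+)$ lie on them, it suffices to track the segment $w_u\cap\Pi^u=\{v=0\}$ and look for intersections of its transported image with $w_s\cap\Pi^s=\{u=0\}$; any such intersection lies in $W^u(\gamma)\cap W^s(\gamma)$ and is therefore a $1$-round homoclinic point.

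The mechanism is the unbounded twisting of $T$. Because $\Gamma_1$ meets $D_1$ at the trace of $W^s(p)$, i.e.\ at the origin $(x_2,y_2)=(0,0)$, the map $T_1$ sends $\{v=0\}$ to an analytic curve through the origin of $D_1$ with a definite nonzero tangent $(\alpha,\gamma)$. Parametrising it so that $\eta=(x_2^2+y_2^2)/2$ increases from $0$, its image under $T$ keeps the radius $\sqrt{2\eta}$ fixed and adds to the polar angle the quantity $\Delta(\eta)=(\omega+O(\eta))\ln(d^2/a(\eta))$ of (\ref{Del}). As $a(\eta)=\omega\eta+O(\eta^2)\to 0^+$ we get $\Delta(\eta)\to+\infty$ when $\eta\to0^+$, so $T\bigl(T_1(\{v=0\})\bigr)$ is an infinite spiral accumulating on the origin of $D_2$, which is the trace of $W^u(p)\supset\Gamma_2$. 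The analytic diffeomorphism $T_2$ carries that origin to $p_s$ and hence transports the spiral to a curve $\tilde w_u\subset\Pi^s$ winding infinitely often around $p_s$.

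It remains to count intersections and to verify transversality. Near $p_s$ the stable trace $w_s$ is the straight segment $\{u=0\}$, a radial line through $p_s$, and an infinite spiral about $p_s$ meets it in a sequence of points accumulating on $p_s$; this produces the countable family of $1$-round homoclinic orbits. For transversality I differentiate, $\Delta'(\eta)=a''(\eta)\ln(d^2/a(\eta))-a'(\eta)^2/a(\eta)$, whose second term $\sim-\omega^2/a(\eta)\to-\infty$ dominates, so $\Delta$ is strictly monotone with $|\Delta'|\to\infty$ near $0$. Thus along $\tilde w_u$ the angular speed overwhelms the radial speed, the spiral's tangent is asymptotically azimuthal, and it crosses the radial line $w_s$ at an angle bounded away from $0$; this transversality survives the linear part of $T_2$ at $p_s$. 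A transverse crossing of the one-dimensional traces inside the two-dimensional $\Sigma$ lifts, upon adjoining the flow direction, to a transverse intersection of the Lagrangian surfaces $W^u(\gamma)$ and $W^s(\gamma)$ inside the three-dimensional $V_0$, as claimed.

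For a type $2$ connection the exit lid $D_2$ lies on the other solid cylinder, so by Remark~\ref{rm:case2} the passage from $D_1$ to $D_2$ is possible only when $a(\eta)<0$. In $V_0$ one has $a(\eta)=\omega\eta+O(\eta^2)>0$ for every $\eta>0$, while $a(0)=0$ singles out exactly the separatrices $W^s(p),W^u(p)$. Hence no orbit entering $D_1$ with $\eta>0$ can reach $D_2$: each such orbit traverses its cylinder and leaves through the opposite lid, quitting a small neighborhood of the contour and never returning. Therefore the only orbits that remain in a sufficiently small neighborhood of the connection within $V_0$ are those of the connection itself, which gives the second assertion. The main obstacle is precisely the spiral-and-transversality step: one must establish rigorously that the transported curve performs infinitely many \emph{monotone} windings and that its crossings with $w_s$ stay transverse uniformly down to $p_s$, controlling how the nonlinear diffeomorphism $T_2$ distorts the spiral near its centre.
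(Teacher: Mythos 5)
Your proposal is correct and follows essentially the same route as the paper: transport $w_u\cap\Pi^u$ by $T_2\circ T\circ T_1$, use the divergence of the twist $\Delta(\eta)\to\infty$ as $\eta\to 0^+$ to obtain a spiral (the paper notes it is in fact two spirals, for the two signs of the parameter $\tau=u-u_-$) accumulating on the trace of $\Gamma_2$, prove transversality of its crossings with $u=0$ via the dominance of $\Delta'(\eta)\sim -a'(\eta)^2/a(\eta)$, and for type 2 invoke the sign condition $a(\eta)>0$ on $V_0$ to show orbits leave without reaching $D_2$. The only difference is presentational — the paper verifies transversality by an explicit computation of $u_1'(\tau)$ on the $T_2$-image in $\Pi^s$, while you check it on $D_2$ and push it forward through $T_2$, which is equivalent.
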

To be precise, let us make more exact the notion of a 1-round homoclinic
orbit for $\gamma$. To this end, consider in $V_0$ a sufficiently small tubular neighborhood
of the orbit $\gamma$. Since $V_0$ is orientable, this neighborhood is homeomorphic to a
solid torus $D^2\times S^1$. The union of point of the orbit $\Gamma_1$, point $p$ and points
of the orbit $\Gamma_2$ give a simple non-closed curve without self-intersections in $V_0$.
One may regard that this infinite curve consists of three connected pieces, one of which, $R$,
lies outside of the tubular neighborhood of $\gamma$, and two remaining ones are inside of
this tubular neighborhood (recall that orbits $\Gamma_1$, $\Gamma_2$ tend asymptotically to
$\gamma$). Now consider a homoclinic orbit to $\gamma$, whose global part outside of the tubular
neighborhood of $\gamma$ belongs to a small neighborhood of the curve $R$, but two remaining
parts are inside of the tubular neighborhood. Such homoclinic orbit for $\gamma$ will be called
1-round one.

\begin{proof} To prove the theorem we will show that a segment of the unstable separatrix
$w_u\cap \Pi^u$ of the saddle fixed point $m$ on $\Sigma$ is transformed by the the map
$T_2\circ T\circ T_1$ into an analytic curve that intersects transversely at the countable
set of points the segment $w_s\cap \Pi^s$ of the unstable separatrix $w_s$ of the same fixed point.

Consider in $\Pi^u: v=0,\;|u-u_-|\le \varepsilon_1< \varepsilon,$ a segment $(A,B)$ of
the curve $w_u$. Its image under the action of $T_1$ is a parameterized
curve on the disk $D_1$: $x_2(\tau) =\alpha\tau + \cdots,$ $y_2(\tau)=\gamma\tau + \cdots$,
its parameter is $\tau = u-u_-$. Since $T_1$ is a diffeomorphism, we get a smooth curve in
$D_1$ passing through $(0,0)$, its tangent vector at $(0,0)$ is nonzero vector
$(\alpha,\gamma)$. Boundary points of this curve denote as $A_1,B_1$ and the curve obtained
as $[A_1,B_1]$.

The curve $[A_1,B_1]$ by the map $T$ is transformed to the spiral-shape curve on the disk $D_2$:
$$
\begin{array}{l}
\bar{x}_2=x_2(\tau)\cos\Delta(\eta(\tau))-y_2(\tau)\sin\Delta(\eta(\tau)), \\
\bar{y}_2=x_2(\tau)\sin\Delta(\eta(\tau))+y_2(\tau)\cos\Delta(\eta(\tau)).
\end{array}
$$
In symplectic polar coordinates on $D_1, D_2$, respectively,
$$
x_2=\sqrt{2\eta}\cos\phi,\quad y_2=\sqrt{2\eta}\sin\phi,\;
\bar{x}_2= \sqrt{2\bar\eta}\cos\theta,\quad \bar{y}_2=\sqrt{2\bar\eta}\sin\theta,
$$
the map $T$ has the form
$$
\bar{\eta}=\eta,\quad \theta = \phi+\Delta(\eta)\;(\mbox{\rm mod}\,2\pi).
$$
This map is defined for values $\eta > 0.$ Under the action of $T$ the curve $[A_1,B_1]$
transforms into two infinite spirals corresponding to $\tau > 0$ and $\tau < 0$
$$
\overline{\eta}=\eta(\tau),\quad \theta=\phi(\tau) + \Delta(\eta(\tau)),
$$
where for $|\tau|$ small enough we have for $\alpha \ne 0$
$$
\eta(\tau) = (x_2^2(\tau)+y_2^2(\tau))/2 = \frac{\alpha^2+\gamma^2}{2}\tau^2 +
O(\tau^3),\;\tan\phi(\tau) = \frac{\gamma}{\alpha} + O(\tau),
$$
and for $\alpha = 0$ the angle is defined via $\cot\phi$, here the values $\phi$
as $\tau \to +0$ and $\tau \to -0$ differ by $\pi.$ Since $\phi$ is bounded as $\tau \to \pm 0$,
but function $\Delta(\eta(\tau))$ monotonically increase to $\infty,$ then each of
spirals, as $|\tau| \to 0,$ tends to $(0,0)$ on $D_2$, making infinite number of rotations
in angle: $\theta(\tau)\to \infty.$ Take a segment on $u=0$ symmetric to $[A,B]$ and its
$T_2$-pre-image $[A_2,B_2]$ on $D_2$, it is an analytic segment through the point $(0,0)$
symmetric to $[A_1,B_1]$. Therefore, it intersects each spiral at the countable set of
points through which orbits pass tending to $\gamma$ as $t\to \pm \infty$, that is,
they are Poincar\'e homoclinic orbits \cite{SHil}. To complete the proof, we need to show
transversality of intersections spirals and $[A_2,B_2]$. Instead, we shall prove
the transversality of $T_2$-images of spirals and the segment $u=0$ on $\Pi^s$.

Consider, for instance, one of spirals, defined by inequality $\tau > 0$ and apply $T_2$
\begin{eqnarray}\label{glob}
 u_1=\gamma\bar{x}_2 + \alpha\bar{y}_2 + \cdots =
\sqrt{2\eta(\tau)}\sqrt{\alpha^2+\gamma^2}\left[\sin(\varphi(\tau)+\Delta(\eta(\tau))+\sigma)+
O(\sqrt{2\eta(\tau)})\right],
\nonumber \\
v_1-v_+ = -\delta\bar{x}_2 - \beta\bar{y}_2 + \cdots = \sqrt{2\eta(\tau)}
\sqrt{\beta^2+\delta^2}\left[\sin(\varphi(\tau)+\Delta(\eta(\tau))+\sigma_1+O(\sqrt{2\eta(\tau)})\right].
\nonumber
\end{eqnarray}
The map $T_2$ transforms the spiral and the point $(0,0)$ from $D_2$ to some spiral-shape curve and
the point $(0,v_+)$ in $\Pi^s$. We need to prove that the spiral obtained does not tangent
to the segment $u=0$ at any common point. To this end, we show that the derivative $u_1'(\tau)$
does not vanish at the intersection points of the spiral with the segment
$u=0$ in $\Pi^s$. As $\eta(\tau)\ne 0,$ zeros of the function $u_1(\tau)$ are determined
by zeros of the function $\sin(\varphi(\tau)+\Delta(\eta(\tau))+\sigma)$, and one needs to check
the inequality $u_1'(\tau)\ne 0$ for those $\tau$ where $u_1=0$.

The derivative $u_1'(\tau)$ at points where $\sin(\varphi(\tau)+\Delta(\eta(\tau))+\sigma)=0$,
is equal up to a nonzero multiplier
$$
\cos(\varphi(\tau)+\Delta(\eta(\tau))+\sigma)(\varphi'(\tau)+
\Delta'(\eta(\tau))\eta'(\tau)).
$$
Thus, the first multiplier is nonzero and the principal term in the
bracket for small enough $\tau$ is $\Delta'(\eta(\tau))\eta'(\tau),$
that tends to infinity as $\tau \to 0$. Indeed, in accordance to formula (\ref{Del})
for $\Delta$ we have
$$
\displaystyle{\Delta'(\eta)= a''(\eta)\ln(d^2/a(\eta))- \frac{a'^2(\eta)}{a(\eta)}=
\frac{-a'^2(\eta)+a''(\eta)a(\eta)\ln(d^2/a(\eta))}{a(\eta)}},
$$
hence the numerator is negative and separated from zero for small $\eta$, but the denominator
tends to zero as $\eta \to +0.$ The ratio $\eta'(\tau)/a(\eta(\tau))$ is
of the order $1/\tau$. Therefore, the existence of a countable set of transverse homoclinic orbits
has been proved.

For the case 2 orbits of the system passing on $\Pi^u$ through the points of unstable curve
$v=0$, $|u-u_-| < \varepsilon$, as $t$ increases, intersect disk $D_1$ and after that leave
$V_0$ (see Fig.~\ref{fig:6}). The same holds true, due to symmetry, as $t$
decreases, for orbits passing on $\Pi^s$ through the points of stable curve
$u=0, |v-v_+| < \varepsilon$. \end{proof}

\begin{figure}[h]
	\centering
	\includegraphics[width=0.6\linewidth]{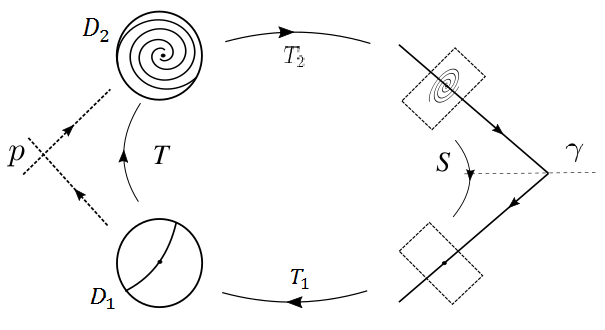}
	\caption{Poincar\'e map as $c=0$.}
\label{fig:7}
\end{figure}
The proven theorem allows one to use results \cite{Smale,SHil} about the
orbit structure near a transverse homoclinic orbit of a two-dimensional
diffeomorphism. Namely, near each homoclinic orbit there exists its
neighborhood such that orbits of a diffeomorphism passing through this
neighborhood make up an invariant hyperbolic subset whose dynamics is
conjugated with the shift on a transitive Markov chain (see, for instance,
\cite{KH}). On $\Pi^s$ we have a countable set of different homoclinic orbits
accumulating at the trace of heteroclinic orbit $\Gamma_2$. It is clear,
for a fixed a homoclinic point from the set the size of a neighborhood,
where the description holds, tends to zero as homoclinic points approach
to the trace of $\Gamma_2$. If we consider an only finite number of homoclinic
points outside of a small neighborhood
of the trace of $\Gamma_2$, then we get a uniformly hyperbolic set generated by
these homoclinic orbits. Therefore, the entire region where hyperbolic set for our case exists
should be of a two-horn shape bounded by two parabola-like curves which are tangent
at the point $(0,v_+).$ The strips near homoclinic orbits (see above) for different
homoclinic points interact each other under iterations of the Poincar\'e map
that lead to a Markov chain with a countable set of states, but the invariant set
obtained in this way is not uniformly hyperbolic but only non-uniformly hyperbolic.
\begin{figure}[h]
	\centering
	\includegraphics[width=0.6\linewidth]{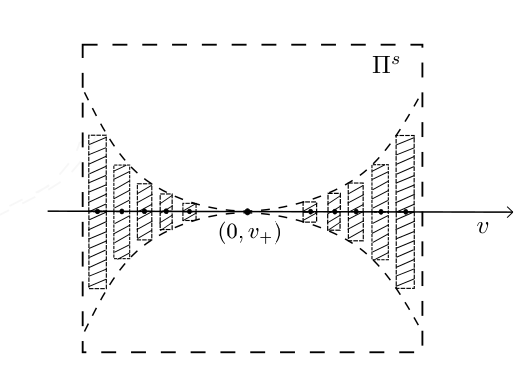}
	\caption{The shape of non-uniform hyperbolicity region in $\Pi^s.$}
\label{fig:8}
\end{figure}

\section{Hyperbolicity and ellipticity in levels $c<0$}

In this section we consider levels $V_c, c < 0,$ near the connection for the case 1.
For the case 2 and $c<0$ all orbits, entering through $D_1(c)$ to a neighborhood, leave it,
the same is true for the orbits entering a neighborhood, as $t$ decreasing, through $D_2(c)$.

We prove: 1) existence of a hyperbolic set constructed on a finite number of
transverse homoclinic orbits to the saddle periodic orbit $\gamma_{c}$; 2) existence
of a countable set of intervals of values $c<0$ accumulating at zero whose values of $c$
correspond to levels where in $V_c$ a 1-round elliptic periodic orbit exists. Remind that
for the case 1 and negative $c$ small enough all orbits in a neighborhood
of $p$ passing through one ``lid'' $D_1(c) = N^s\cap V_c$ of the solid cylinder,
as $t$ increases, intersect its second lid $D_2(c) = N^u\cap V_c$.

Existence of finitely many transverse homoclinic orbits to a periodic orbit $\gamma_{c}$
is almost evident and follows from their existence at $c=0.$ For any negative $c$ small enough
we consider that solid cylinder of the local part of the level $V_c$ near $p$, whose
lids are $D_1(c), D_2(c).$ Then global maps transform: a segment $w_u(c)$ on
$\Pi^u(c)$ (i.e. $v=0$) is mapped by $T_1(c)$ onto a curvilinear segment on $D_1(c)$ passing near point
$(0,0)$ at the distance of the order $|c|^l,$ $l\ge 1$. The same holds true for a
symmetric curve on $D_2(c)$ being the pre-image w.r.t. $T_2(c)$ of the
segment of $w_s(c)$ (i.e. $u=0$). Let us cut out on $D_1(c),$ $D_2(c)$
small disks of the radius of an order $O(\sqrt{|c|})$ centered at points $(0,0)$.
Since the map $T(c)$ preserves $\eta$, we cut out thereby by one interval on each
curvilinear segment of each disk. After cutting out two remaining segments stay on each disk.
Consider the images of the remaining segments on $D_2(c)$ under the map $T_2(c)\circ
T(c)$.
\begin{theorem}
For $|c|$ small enough the image of each remaining segment is a finite spiral
which intersects transversely at a finite number of points the curve $u=0$ in $\Pi^s(c)$.
\end{theorem}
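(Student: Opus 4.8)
The plan is to regard this as the $c<0$ counterpart of the singular‑level theorem (Theorem 3), the one decisive difference being that on $V_c$ with $c<0$ the action variable $\eta$ no longer reaches $0$: the relevant solid cylinder projects onto $\xi=a_c(\eta)$ with $a_c(\eta)\ge a_c(0)=-c=|c|>0$, so the Lyapunov orbit $l_c$ is absent and $\eta$ stays bounded below. First I would record the local map $T(c):D_1(c)\to D_2(c)$. Exactly as in \eqref{T}, \eqref{Del} it preserves $\eta$, and in the symplectic polar coordinates $(\eta,\phi)$, $(\bar\eta,\theta)$ it acts by $\bar\eta=\eta$, $\theta=\phi+\Delta(\eta)$ with $\Delta(\eta)=a_c'(\eta)\ln(d^2/a_c(\eta))$. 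Because $a_c(\eta)\ge|c|$, the twist is now bounded, $|\Delta(\eta)|\le C\ln(1/|c|)$, and this boundedness is the whole source of finiteness.

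Next I would track one remaining segment. Its $T_1(c)$‑image on $D_1(c)$ is a smooth arc whose closest approach to $(0,0)$ is of order $|c|^l$; cutting out the disk of radius $O(\sqrt{|c|})$ (i.e.\ $\eta\lesssim|c|$) discards the non‑monotone cap around that closest point (note $|c|^l\ll\sqrt{|c|}$ since $l\ge1$), so on each remaining piece $\eta$ runs monotonically over an interval $[\eta_{\min},\eta_{\max}]$ with $\eta_{\min}\asymp|c|$ and $\eta_{\max}$ a fixed small constant. Applying $T(c)$ and then $T_2(c)$ and reading off the $u_1$‑coordinate gives, as in \eqref{glob},
$$u_1=\sqrt{2\eta}\,\sqrt{\alpha^2+\gamma^2}\,\bigl[\sin(\phi+\Delta(\eta)+\sigma)+O(\sqrt{\eta})\bigr].$$
Over the segment the phase $\phi+\Delta(\eta)$ varies by $\asymp\omega\ln(1/|c|)$, a finite quantity, so the arc makes only finitely many turns (a \emph{finite} spiral); and since $\eta\ge\eta_{\min}>0$ keeps the prefactor away from zero, the zeros of $u_1$ are exactly the zeros of the bracket, a finite set.

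For transversality I would parameterize each remaining piece by $\eta$ (legitimate because $\eta$ is monotone in $\tau$ there) and show the phase $\Psi(\eta)=\phi(\eta)+\Delta(\eta)+\sigma$ is strictly monotone, so that $\sin\Psi$ has only simple zeros. Differentiating, $\Delta'(\eta)=a_c''\ln(d^2/a_c)-(a_c')^2/a_c$; with $a_c'\to\omega>0$ its leading part is $-(a_c')^2/a_c\asymp-1/\eta$ for $\eta\gtrsim|c|$, whereas the logarithmic remainder is only $O(\ln(1/\eta))$ and the angular term is $\phi'(\eta)=O(1/\sqrt{\eta})$. Since $1/\eta$ dominates both $\ln(1/\eta)$ and $1/\sqrt{\eta}$ as $\eta\to0$, one gets $\Psi'(\eta)<0$ throughout once the segment is short enough that $\eta_{\max}$ stays in this window. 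Hence $\Psi$ is monotone, every zero of $\sin\Psi$ is simple, and transferring back through $u_1'(\tau)=u_1'(\eta)\,\eta'(\tau)$ with $\eta'(\tau)\ne0$ yields transverse intersections of the spiral with $u=0$ in $\Pi^s(c)$; their number, $\asymp\frac{\omega}{\pi}\ln(1/|c|)$, is finite.

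The main obstacle is precisely this uniform domination forcing monotonicity of $\Psi$: one must check that the leading twist term $(a_c')^2/a_c\asymp1/\eta$ outweighs \emph{both} the logarithmic remainder $a_c''\ln(d^2/a_c)$ \emph{and} the angular contribution $\phi'\sim1/\sqrt{\eta}$, uniformly over $\eta\in[\eta_{\min},\eta_{\max}]$ and uniformly in small $c$. This dictates the order of quantifiers — fix the length $\varepsilon_1$ of the $w_u$‑segment (hence $\eta_{\max}$) small first, and only then let $|c|\to0$ — and it explains the two geometric reductions used above: the radius‑$\sqrt{|c|}$ cut, which removes the innermost cap so that $\eta$ is monotone and bounded below by $\asymp|c|$ on what remains, and the shortness of the segment, which keeps $\eta_{\max}$ inside the regime where the $1/\eta$ twist term wins. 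Away from these arrangements the $O(1)$ outer turns of the spiral would be the only place where tangencies could a priori occur, so controlling $\eta_{\max}$ is exactly what rules them out.
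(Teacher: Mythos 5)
Your proposal is correct and takes essentially the same route as the paper's own proof: the same excision of the disk of radius $O(\sqrt{|c|})$ about $(0,0)$ (so that on the two remaining segments $\eta\gtrsim|c|$), the observation that the bounded twist $\Delta_c$ makes the images finite spirals, and transversality obtained exactly as in the $c=0$ case from the dominance of the term $-(a_c')^2/a_c\asymp 1/\eta$ in the phase derivative. Your write-up simply makes explicit the uniformity and quantifier bookkeeping that the paper compresses into the phrase ``we follow the lines of the case $c=0$.''
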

\begin{proof} We follow the lines of the case $c=0$. The difference is
that we first cut out the disk on $D_1(c)$ of the radius $O(\sqrt{|c|}),$ $\eta\le \eta(c)$,
with the center at $(0,0)$. The image of the segment $v=0$ on $\Pi^u(c)$
under the action of the map $T_1(c)$ is a smooth curve passing at the
distance of the order $c^l,$ $l\ge 1$, from the point $(0,0)$ (the case when this curve
passes through the point $(0,0)$ is not excluded). Therefore the circle of
the radius $\eta = \eta(c)\sim |c|$ intersects this curve at two points, i.e.
parts of this curve lying outside of the circle are two smooth segments
and their images w.r.t. $T(c)$ are two finite spirals which intersect
transversely the $T_2(c)$-pre-image of the segment $u=0$ from $\Pi^s(c)$. Thus, we get
a finite number of transverse homoclinic orbits for $\gamma_c$. Obviously, the less
$|c|$, the more number of transverse homoclinic orbits can be found.
\end{proof}

To prove the existence of elliptic points in some neighborhood of the connection in
the whole $M$, we first find a countable set
of values $c_n < 0$, for which the system in the level $V_{c_n}$ has a
non-transverse homoclinic orbit with quadratic tangency for periodic orbit $\gamma_{c_n}$.
This allows one to apply results on the existence of cascades of elliptic periodic orbits
on the levels close to $V_{c_n}$ (see, for instance, \cite{Gon,DGG}).
\begin{theorem}
Suppose inequality $a_1'(0)\ne 0$ to hold. There is a sequence $c_n \to -0$
such that in the level $V_{c_n}$ periodic orbit $\gamma_{c_n}$ possesses
a homoclinic orbit along which stable and unstable manifolds
$W^s(\gamma_{c_n})$, $W^u(\gamma_{c_n})$ have a quadratic tangency. For every such $c_n$
there exists a countable set of $c$-intervals $I_{nm}$, $I_{nm} \to c_n$ as $m\to \infty,$
whose values $c\in I_{nm}$ represent levels where the system has a 1-round elliptic
periodic orbit in this level of the Hamiltonian.
\end{theorem}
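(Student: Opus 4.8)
The plan is to follow the $c=0$ argument of the previous theorems, tracking the trace of $W^u(\gamma_c)$ through the neighborhood of $p$ and detecting the parameters $c<0$ at which its return to $\Pi^s(c)$ becomes tangent to the stable curve $u=0$. First I form the return map $T_2(c)\circ T(c)\circ T_1(c)$ on the solid cylinder whose lids are $D_1(c),D_2(c)$. On $V_c$ the local integrals give $\xi=a_c(\eta)=-c+\omega\eta+O(\eta^2+c^2)>0$ for all $0\le\eta\le\eta_*$, so the local map $T(c)$ is everywhere defined and in symplectic polar coordinates keeps the rigid-rotation form $\bar\eta=\eta$, $\theta=\phi+\Delta_c(\eta)$ with
$$
\Delta_c(\eta)=-\frac{h_\eta}{h_\xi}\,\ln\!\big(d^2/a_c(\eta)\big),\qquad \Delta_c(0)=\big(\omega+O(c)\big)\,\ln\!\big(d^2/|c|\big).
$$
The decisive structural fact is that $\Delta_c(0)\to+\infty$ monotonically as $c\to-0$. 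The image under $T_1(c)$ of the unstable segment $v=0$ is a smooth curve on $D_1(c)$ whose closest approach to the origin is $O(|c|)$; $T(c)$ winds it into a finite spiral of $\sim\ln(1/|c|)$ turns, and $T_2(c)$ carries this to a tightly wound spiral in $\Pi^s(c)$ centred at the point with $u$-coordinate $a_1(c)$. Since at $c=0$ the trace of $\Gamma_1$ on $D_1$ is the origin, one has $a(0)=b(0)=0$, hence $a_1(0)=\gamma(0)a(0)-\alpha(0)b(0)=0$; the hypothesis $a_1'(0)\ne0$ then gives $a_1(c)=a_1'(0)c+O(c^2)\ne0$ for small $c<0$, which pushes the spiral centre off the line $u=0$ by a distance of order $|c|$.

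Next I write the tangency of this spiral with $u=0$. With
$$
u_1=a_1(c)+\sqrt{2\eta}\,\sqrt{\alpha^2+\gamma^2}\,\sin\!\big(\varphi+\Delta_c(\eta)+\sigma\big)+\cdots,
$$
a contact requires $u_1=0$ together with $\partial_\tau u_1=0$. In the derivative the two competing contributions are the slow amplitude term $\tfrac{\eta'}{\sqrt{2\eta}}\sin\Psi$ and the fast rotation term $\sqrt{2\eta}\,\Psi'\cos\Psi$ (here $\Psi=\varphi+\Delta_c+\sigma$ and $\Psi'\approx\Delta_c'\eta'$); their balance fixes $\tan\Psi$ to a value determined by $\omega$, in particular to one with $\sin\Psi\ne0$ and $\cos\Psi\ne0$. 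The equation $u_1=0$ then fixes the contact radius by $\sqrt{2\eta}\,\sqrt{\alpha^2+\gamma^2}=|a_1(c)|$, so $\eta\sim c^2$ and $\Delta_c(\eta)\approx\omega\ln(d^2/|c|)$. Eliminating reduces the pair of conditions to a single transcendental equation whose left side, through $\Delta_c$, runs monotonically to $+\infty$ as $c\to-0$; hence its solutions accumulate at $0$ and produce the sequence $c_n\to-0$ at which $W^u(\gamma_{c_n})$ and $W^s(\gamma_{c_n})$ touch. I then check that the contact is \emph{quadratic}: the leading term of $\partial_\tau^2u_1$ at the tangency is $-\sqrt{2\eta}\sqrt{\alpha^2+\gamma^2}\,(\Psi')^2\sin\Psi$, which is nonzero because $\sin\Psi\ne0$ there (a consequence of $\omega\ne0$) while $\Psi'\to\infty$.

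Finally I must show that the one-parameter family of area-preserving maps $S_c|_{V_c}$ unfolds each tangency \emph{generically}, i.e. the splitting between the spiral and $u=0$ has nonzero $c$-derivative at $c_n$. Since $\partial_\tau u_1=0$ at the contact, the splitting speed equals $\partial_c u_1=a_1'(c)+\sqrt{2\eta}\sqrt{\alpha^2+\gamma^2}\cos\Psi\,\partial_c\Delta_c+\cdots$; using $\partial_c\Delta_c\sim \omega/|c|$ and $\sqrt{2\eta}\sqrt{\alpha^2+\gamma^2}=|a_1(c)|\sim|a_1'(0)|\,|c|$ together with the fixed value of $\cot\Psi$, the apparently dangerous $O(1)$ phase contribution collapses to a constant multiple of $a_1'(0)$, so $\partial_c u_1$ is a nonzero multiple of $a_1'(0)$. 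Controlling this cancellation of the competing $O(1/|c|)$ and $O(|c|)$ factors uniformly in $n$ is the step I expect to be the main obstacle. Once transversality is in hand, the assertion about elliptic orbits follows from the established theory of cascades of elliptic periodic points that accompany a nondegenerate homoclinic tangency in generic one-parameter families of two-dimensional conservative diffeomorphisms \cite{New,GSh,Gon,DGG}: for each $c_n$ it yields a countable family of parameter windows $I_{nm}\to c_n$ on which $S_c$ has a generic elliptic periodic point born from the tangency. Carrying such a point back along the flow produces, for every $c\in I_{nm}$, a $1$-round elliptic periodic orbit of $X_H$ lying in $V_c$, which is exactly the claim.
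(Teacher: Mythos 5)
Your proposal follows the paper's architecture: the same composition $T_2(c)\circ T(c)\circ T_1(c)$, the same role of $a_1'(0)\ne 0$ in placing the spiral at distance of order $|c|$ from $u=0$, the same contact radius $\eta_c=a_1^2(c)/\bigl(2(\alpha^2+\gamma^2)\bigr)+O(c^3)$, the same counting mechanism via the monotone divergence $\Delta_c(\eta_c)\sim-\ln(-c)$, and the same final appeal to the elliptic-cascade theorem. But your execution of the tangency step is internally inconsistent. If, as you claim, the balance of the slow amplitude and fast rotation terms fixed $\tan\Psi$ at a value with $\cos\Psi\ne0$, then $u_1=0$ would give $\sqrt{2\eta}\,\sqrt{\alpha^2+\gamma^2}\,|\sin\Psi|=|a_1(c)|$ with $|\sin\Psi|<1$ strictly; yet your contact-radius equation $\sqrt{2\eta}\,\sqrt{\alpha^2+\gamma^2}=|a_1(c)|$ forces $|\sin\Psi|=1$, $\cos\Psi=0$. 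The second alternative is the correct one: the contact sits at the \emph{nose} of the spiral, the point where $\eta(\tau)$ attains its minimum $\eta_c$, so that $\eta'(\tau)=0$ there; consequently $\Psi'=\varphi'+\Delta_c'\eta'$ reduces to $\varphi'=O(1)$ and is \emph{not} large at the contact, so your claimed leading term $-\sqrt{2\eta}\sqrt{\alpha^2+\gamma^2}(\Psi')^2\sin\Psi$ of $\partial_\tau^2u_1$ is not the dominant one. Quadraticity comes instead from the amplitude: $u_1''\approx(\eta''/\sqrt{2\eta})\sqrt{\alpha^2+\gamma^2}\sin\Psi\sim 1/r(c)$, which is exactly the computation in the paper's Lemma \ref{tn}. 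The paper organizes the whole step around that lemma: the circle $\eta=\eta_c$ touches the $T_1(c)$-image of $v=0$ (and, by the symmetry $L:(x_2,y_2)\to(-x_2,y_2)$, the $T_2(c)$-preimage of $u=0$) quadratically at a \emph{unique} point each; a tangency occurs precisely when the two contact points coincide on the circle, and the slow phase $\varphi(c)$ has a limit as $c\to-0$ exactly because the two points are $L$-symmetric. You supply no control of the slow phase in your transcendental equation — reversibility does that work in the paper — and without the uniqueness statement your "nose" is not even well defined.

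Your last step attempts more than the paper does, and the sketch fails. At the true contact $\cos\Psi=O(r(c))$ and $r(c)\sqrt{\alpha^2+\gamma^2}=|a_1(c)|+O(c^2)$ with $\sin\Psi=-\operatorname{sign}(a_1(c))$; hence the two genuinely first-order contributions to $\partial_c u_1$ at the nose, namely $a_1'(c)$ and $\sqrt{\alpha^2+\gamma^2}\,r'(c)\sin\Psi$, cancel at leading order (reversibility forces both manifold traces to osculate the \emph{same} circle $\eta=\eta_c$, so to first order the nose slides along $u=0$ rather than splitting from it), while the phase term $r\sqrt{\alpha^2+\gamma^2}\cos\Psi\,\partial_c\Delta_c$ is only $O(r^2/|c|)=O(|c|)$. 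So the splitting speed is not a nonzero multiple of $a_1'(0)$: what you flagged as a uniformity "obstacle" is in fact a computation whose claimed leading term vanishes, and deciding genericity of the unfolding requires a finer analysis than your cancellation argument. Note that the theorem as stated does not ask you to carry this burden: the paper itself does not verify genericity of the family $S_c$ at $c_n$, but concludes by invoking Theorem \ref{casc} (after \cite{Gon,DGG}) for a quadratic homoclinic tangency in a one-parameter family of symplectic maps. If you retain your stronger programme, the degeneracy just described is the real difficulty to address.
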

\begin{proof} The inequality $a_1'(0)\ne 0$ is of the general position
condition. It is the analog of the condition C in \cite{Gon}. This guarantees
that for $|c|$ small enough the $T_2(c)$-image of the
point $(0,0)$ in $\Pi^s(c)$ is an analytic curve intersecting transversely
stable manifold $u=0$ of the saddle fixed point. To be precise, we remind
we assume coordinates $(u,v)$ not depending on $c$ only maps do.
As a corollary of this inequality, by reversibility, there is a $c_0 < 0$ such that
for $c\in (c_0,0)$ the $T_1(c)$-image of the segment $v=0$ is a smooth curve
that does not pass through the point $(0,0)$ on $D_1(c)$ and the distance from $(0,0)$
to this curve is of the order $|c|.$

To prove the first assertion of the theorem, we consider
level $V_c$ for small negative $c$ and find the image of the segment $v=0$
from $\Pi^u(c)$ under the map $ T(c)\circ T_1(c)$. This is an analytic curve
in $D_2(c)$. One needs to show that this curve for a countable set of $c$-values touches
the $T_2(c)$-pre-image of the segment $u=0$ from $\Pi^s(c)$.

Let us write down the representation of the map $T(c)$. It is similar to (\ref{T})
but for $c<0$ function $\Delta_c(\eta)$ is analytic and has the form
$$
\Delta_c(\eta)= a'_c(\eta)\ln{\frac{d^2}{a_c(\eta)}},\;a_c(\eta)= -c +
\omega\eta+O_2(c,\eta) > 0.
$$
The positivity of the function $a_c(\eta)$ implies the map $T(c)$ be a
local analytic symplectic diffeomorphism in some neighborhood of the
point $(x_2,y_2)=(0,0)$ for all sufficiently small in modulus negative $c$.

The map $T_1(c)$ is also analytic, hence the $T_1(c)$-image of the segment $v=0,
|u-u_-|\le \varepsilon$  be an analytic curvilinear segment in $D_1(c)$ passing
near point $(x_2,y_2)= (0,0)$ at the distance of the order $|c|$. This follows
from the genericity assumption $a_1'(0)\ne 0$ and symmetry of $T_1(c)$ and $T_2(c)$.
By symmetry, the $T_2(c)$-pre-image of
the segment $u=0, |v-v_+|\le \varepsilon$ is also an analytic curvilinear
segment in $D_2(c)$ being symmetric w.r.t. $L$ to the segment in $D_1(c)$ and passing
near the point $(\bar{x}_2,\bar{y}_2)= (0,0)$ at the same distance of the order $|c|$.

In polar coordinates on disks $D_1(c)$, $D_2(c)$ the map $T_c$ has of the form
$$
\overline{\eta}=\eta,\quad \overline{\theta}=
\varphi+\Delta_c(\eta),
$$
with $\Delta_c(\eta)=(\omega+\cdots)\ln[d^2/(-c+\omega\eta+\cdots)].$

Expanding in formulas for $T_1(c)$ coefficients by the Taylor formula up to the terms of the
first order in $c$ we  get $a(c)=ac+...,\;a\ne 0,\; b(c)=bc+...$. Then one has
$$
\Delta_c(\eta)= -\omega \ln \frac{d^2}{-c+\omega[(ac+\alpha(u-u_-))^2+(bc+
\gamma(u-u_-))^2]/2}+O_2(c,\eta).
$$
On the disk $D_1(c)$ the curvilinear segment under consideration is an analytic smooth curve
at the distance of the order $|c|$ from $(0,0)$, so there is a circle $\eta = \eta_c$ such
that this circle and the curve have a common point and they are tangent at this
point. In principle, this point can be not unique. Other points of this curve
are outside of this circle.

Local map $T(c)$ preserves $\eta$, hence the $T(c)$-image on $D_2(c)$ of the curve
 is a spiral-shape curve that lies outside of the circle
$\eta=\eta_c$ on $D_2(c).$ By symmetry, on the same circle on $D_2(c)$
there are other its points of tangency with the curve being $T_2(c)$-pre-image
of the segment $u=0$ from $\Pi^s(c).$ An important observation is the
following assertion.
\begin{lemma}\label{tn}
For $c$ small enough the only point of tangency the circle and the curve
on $D_1(c)$ exists. The tangency at this point is quadratic.
\end{lemma}
\begin{proof}
Denote $\sigma_c^s$, $\sigma_c^u$ circles $\eta=\eta_c$ on $D_1(c)$, $D_2(c)$, respectively.
By symmetry, it is sufficient to prove the assertion for the closed curve
$T_2(c)(\sigma_c^u)$, i.e. this curve is quadratically tangent to $u=0$ at
exactly one point as $|c|$ small enough. The circle $\sigma_c^u$ has the
representation in polar coordinates $\bar{x}_2 = \sqrt{2\eta_c}\cos\theta,$
$\bar{y}_2 = \sqrt{2\eta_c}\sin\theta,$ $r(c)=\sqrt{2\eta_c}\sim |c|.$ Thus,
its $T_2(c)$-image is (\ref{gloc})
$$
u_1 = a_1(c)+r(c)[\gamma(c)\cos\theta + \alpha(c)\sin\theta + O(r)], \;
v_1-v_+ = b_1(c) - r(c)[\delta(c)\cos\theta + \beta(c)\sin\theta + O(r)].
$$
First we find the points where a tangent to this curve is collinear with
the vector $(0,1)$, i.e. $u_1'(\theta)=0$. This gives the equation
$-\gamma\sin\theta + \alpha\cos\theta + O(r)=0$. It has two roots defined
up to $O(r)$
as $\theta_1 = \rho,$ $\theta_2 = \rho + \pi,$ where $\sin\rho = \alpha/\sqrt{\alpha^2+\gamma^2},$
$\cos\rho = \gamma/\sqrt{\alpha^2+\gamma^2}.$ Equating $u_1(\theta_i)=0$ we
come to the relations relative $r$: $r(c)=\pm
a_1(c)/\sqrt{\alpha^2+\gamma^2}+O(c^2)$,
where the sign is determined by that $\theta_i$ for which $r(c)>0.$ Due to assumption
$a_1'(0)\ne 0,$ we get a unique root providing the tangency of an even order.

To prove the tangency be quadratic, one needs to check that for $c$ small
enough the derivative $u_1^{\prime\prime}(v_1)\ne 0$ at the tangency
point. This derivative in the parametric form is given as (we omit subscript 1 in this calculation)
$$
u^{\prime\prime}(v)=\frac{u_\theta^{\prime\prime}v_\theta'-
u_\theta^{\prime}v_\theta^{\prime\prime}}{v_\theta'^3}=
\displaystyle{\sqrt{\alpha^2+\gamma^2}\frac{[-(\alpha\delta-\beta\gamma)+O(r)]}{\pm r(1+O(r))}}=
\pm\sqrt{\alpha^2+\gamma^2}r(c)^{-1}(1+O(r(c))).
$$
Since we saw that $r(c)\sim |c|$ as $|c|\to 0$, this derivative is as
larger in modulus as smaller $|c|$ is. Thus, we conclude the tangency be quadratic.
\end{proof}

So, we have on $D_2(c)$ two analytic curves: a spiral and a curve, both they touch the circle
$\eta=\eta_c$ at only points (generally speaking, different ones). Now let us follow a mutual
position of these two point on the circle as $c\to -0.$ The point on the curve tends
to the point $(0,0)$ as $c\to -0$ with a definite tangent. But the tangency point of the
spiral, as we shall prove below, rotates monotonically as $c\to -0$ performing infinitely
many full revolutions in the angle. This implies that the point of tangency for the spiral infinitely
many times $c_n$ pass through the point of tangency for the curve giving
quadratic tangency of he spiral and the curve (see, Fig.~\ref{fig:9}).

Let us call the unique point of tangency of the circle and the spiral on $D_2(c)$ a nose of the spiral.
Near this point, due to a quadratic tangency, the spiral is located out of the disk
bounded by the circle. Let us show that the nose of the spiral moves
monotonically in $\theta$ as $c\to -0.$

The coordinates of the nose correspond to that point of the segment $v=0$
where the $T_1(c)$-pre-image of the circle $\eta = \eta_c$ in $D_1(c)$ touches the segment.
The angle $\theta(c)$ corresponding to the nose of the spiral is calculated using the
formula $\theta(c) = \varphi(c) + \Delta_c(\eta_c)$ where the values
$(\varphi(c),\eta(c))$ have to be inserted. As we saw when proving the lemma \ref{tn}, the angle
$\varphi(c)$ has a definite limit as $c\to -0$, since the point of tangency of the curvilinear
segment and the circle $\eta = \eta_c$ on $D_1(c)$ and
the point of tangency of the circle $\eta = \eta_c$ and a curvilinear
segment on $D_2(c)$ are connected by the symmetry relation $L:(x_2,y_2)\to
(\bar{x}_2,\bar{y}_2)$ $\bar{x}_2 = -x_2, \bar{y}_2=y_2.$
We shall show that the value $\Delta_c(\eta_c)$ tends monotonically to infinity as $c\to -0$. If
so, this gives necessary conclusion on the infinite number of full
revolutions in the angle $\theta$.

The value $\eta_c$ at the tangency point is equal to
\begin{equation}\label{et}
\eta_c = \frac12(x_2^2(c)+y_2^2(c))= r^2(c)/2 =\frac{a_1^2(c)}{2(\alpha^2(c)+\gamma^2(c))}+O(c^3).
\end{equation}
Therefore, we have
$$
\Delta_c(\eta_c) = \displaystyle{(\omega + O(c^2))\ln\frac{d^2}
{-c +\omega\frac{1}{2(\alpha^2 + \gamma^2)}a_1^2(c)+O(c^3)}} \sim -\ln(-c).
$$
Thus, $\theta(c)$ depends on $c$ monotonically and increases unboundedly as $c\to -0.$
Therefore the point in $D_2(c)$, being the nose of the spiral, infinitely
many times $c_n$ coincides with the point on the same circle where the
pre-image of the segment $u=0$ from $\Pi^s(c)$ touches the circle.
\begin{figure}[h]
	\centering
	\includegraphics[width=0.5\linewidth]{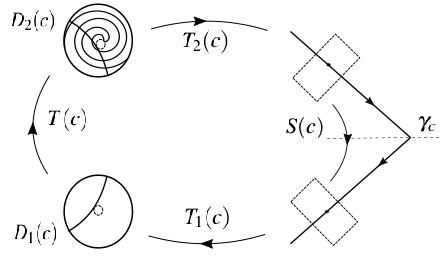}
	\caption{Case 1: Poincar\'e map as $c<0$, a mechanism of homoclinic tangency.}
\label{fig:9}
\end{figure}

The second assertion of the theorem follows from the theorem on the
existence of elliptic points near a homoclinic tangency for a symplectic map
(see, for instance, \cite{Gon,DGG}).
\begin{theorem}\label{casc}
Let $f$ be a smooth (at least $C^4$) symplectic map having a saddle fixed point $p$ and a
homoclinic orbit $f^n(q)$ through the point $q \ne p$. Suppose stable and
unstable curves of $p$ are quadratically tangent at $q$. Then for any
generic smooth one-parametric family of smooth symplectic maps
$f_\mu$ that coincides as $\mu=0$ with $f$ on any segment $[-\mu_0,\mu_0]$ there
is an integer $k_0\in \Z$ and infinitely many open intervals $I_k$, $k\ge k_0,$
such that $I_k\to 0$, as $k\to\infty$, and the map $f_\mu ,\quad \mu\in I_k$ has
a one-round elliptic homoclinic orbit (of the period $q+k$).  	
\end{theorem}
\end{proof}

\section{Hyperbolicity and ellipticity as $c>0$}

As we know, any level $V_c$ for $c>0$ small enough contains a Lyapunov saddle periodic orbit $l_c$
(remind that we assume $\omega > 0$). Its local stable $W^s(l_c)$ and unstable
$W^u(l_c)$ manifolds belong to $V_c$ and their extension by the flow occur
near $W^s(p)$ and $W^u(p),$ respectively. Therefore, they intersect cross-sections
$D_1(c)$ and $D_2(c)$ along the circles $\sigma_s(c)$, $\sigma_u(c)$.

As was indicated above, all flow orbits cutting $D_1(c)$ inside of the circle $\sigma_s(c)$
go out from $U$ not intersecting $D_2(c)$. That is why, we do not track for these
orbits. But flow orbits cutting $D_1(c)$ outside of $\sigma_s(c)$, as time increases,
do intersect $D_2(c)$ and further the cross-section $\Sigma_c.$ We construct here
a hyperbolic set that is formed near a heteroclinic connection that involve a pair of
saddle periodic orbits $l_c$, $\gamma_c$ and four transverse heteroclinic orbits, of which two
go, as time increases, from $l_c$ to $\gamma_c$ (near $\Gamma_2$), and two others -- from
$\gamma_c$ to $l_c$ (near $\Gamma_1$). Besides, there is a countable set of
transverse homoclinic orbits for every periodic orbits $l_c$ and $\gamma_c$. All this the base for
constructing the hyperbolic set.

\begin{theorem}
For $c>0$ small enough $W^s(l_c)$ and $W^u(\gamma_c)$ intersect transversely
each other along two heteroclinic orbits $\Gamma_{11}(c)$, $\Gamma_{12}(c)$ and,
by symmetry, $W^u(l_c)$ and $W^s(\gamma_c)$ intersect transversely each other along
two heteroclinic orbits $\Gamma_{21}(c)=L(\Gamma_{11}(c))$, $\Gamma_{22}(c)= L(\Gamma_{12}(c))$,
forming thereby a transverse heteroclinic connection.

The trace of $W^u(\gamma_c)$ in $\Pi^s(c)$ $($the image of the segment $v=0$ under the action of
the map $T_2(c)\circ T(c)\circ T_1(c)$$)$ consists of a pair of
spiral-shape analytic curves which wind both on the closed curve
$T_2(c)(\sigma_u(c))$ and intersect transversely the segment $u=0$ at a countable set of points
being traces of transverse Poincar\'e homoclinic orbits of the periodic
orbit $\gamma_c$.

By Smale's $\lambda$-lemma, for $n$ large enough, the closed curve
$T_2(c)(\sigma_u(c))$ contains two its segments with end points on the segment $u=0$,
whose $n$-iterations under map $S(c)$ give a countable family of analytic curves
smoothly accumulating, as $n\to \infty$, to the segment $v=0$. There is an integer $n_0(s)$
such that for $n > n_0(s)$ these curves transversely intersect the closed curve
$T_1^{-1}(c)(\sigma_s(c))$ giving a countable set of points being traces
of transverse homoclinic orbits for $l_c$ $($see, Fig.~\ref{fig:10}$)$.
\end{theorem}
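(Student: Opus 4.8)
The plan is to transfer the construction from the singular level $c=0$ to the cross-sections $D_1(c), D_2(c), \Sigma_c$, exploiting that for $c>0$ the local map $T(c):D_1(c)\to D_2(c)$ is defined only on the region $\eta>\eta_c$ exterior to the circle $\eta=\eta_c$ of radius $r_c=\sqrt{2\eta_c}\sim\sqrt{c}$, which is precisely the trace $\sigma_s(c)$ of $W^s(l_c)$ on $D_1(c)$, while in polar coordinates $T(c)$ retains the form $\bar\eta=\eta$, $\theta=\varphi+\Delta_c(\eta)$ with $\Delta_c(\eta)=a_c'(\eta)\ln(d^2/a_c(\eta))$ and $a_c(\eta)=-c+\omega\eta+\cdots$. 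The key new feature is a scale separation: by (\ref{gloc}) the $T_1(c)$-image of the segment $v=0$ on $\Pi^u(c)$ is a nearly straight analytic arc of length $\sim\varepsilon$ whose minimal distance to $(0,0)$ on $D_1(c)$ is of order $|c|$, hence $\ll r_c\sim\sqrt{c}$. First I would record that this arc therefore penetrates the disk bounded by $\sigma_s(c)$ and crosses the circle $\sigma_s(c)$ at exactly two points, and that these crossings are transverse: near the center the arc runs almost diametrically, so its tangent is almost radial, hence transverse to the circle. Each crossing is a transverse intersection of $W^u(\gamma_c)$ with $W^s(l_c)$, giving $\Gamma_{11}(c), \Gamma_{12}(c)$; applying $L$ and reversibility yields $\Gamma_{2i}(c)=L(\Gamma_{1i}(c))$ and the full transverse heteroclinic connection between $l_c$ and $\gamma_c$.

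For the second assertion I would take the two sub-arcs of the $T_1(c)$-image of $v=0$ lying outside $\sigma_s(c)$ (i.e.\ on $\eta>\eta_c$) and push them by $T(c)$. Since $a_c(\eta)\to 0^+$ as $\eta\to\eta_c+0$, the twist $\Delta_c(\eta)\to+\infty$, so each sub-arc is wound by $T(c)$ into an infinite spiral on $D_2(c)$ accumulating on $\sigma_u(c)=\{\eta=\eta_c\}=W^u(l_c)\cap D_2(c)$. Composing with $T_2(c)$ carries this pair of spirals into $\Pi^s(c)$, where they wind on the closed curve $T_2(c)(\sigma_u(c))$. The intersections with the stable curve $u=0$ are detected and shown transverse exactly as at $c=0$: the zeros of the first component $u_1$ are governed by a sine of the phase $\varphi+\Delta_c(\eta)$, and at such zeros the derivative is dominated by the term $\Delta_c'(\eta)\eta'$, which blows up as $\eta\to\eta_c+0$ because the numerator of $\Delta_c'$ stays bounded away from zero while $a_c(\eta)\to 0$. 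This produces the countable set of transverse homoclinic points of $\gamma_c$.

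For the third assertion I would invoke Smale's $\lambda$-lemma at the saddle fixed point $m_c$ of $S(c)$ on $\Sigma_c$. The closed curve $T_2(c)(\sigma_u(c))$, the trace of $W^u(l_c)$, meets the stable curve $u=0$ of $m_c$; I would isolate two of its arcs whose endpoints lie on $u=0$, so that their $S^n(c)$-images stretch along and $C^1$-converge to the unstable curve $v=0$ of $m_c$. On the other side, pulling the first assertion back by $T_1^{-1}(c)$ shows that $T_1^{-1}(c)(\sigma_s(c))$, the trace of $W^s(l_c)$ in the $\Pi^u(c)$-region, crosses $v=0$ transversely at the two heteroclinic points. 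Hence there is $n_0(s)$ such that for $n>n_0(s)$ each iterated curve, being $C^1$-close to a compact piece of $v=0$, meets $T_1^{-1}(c)(\sigma_s(c))$ transversely near those crossings; since these intersections lie simultaneously on $W^u(l_c)$ and $W^s(l_c)$, they are transverse homoclinic points of $l_c$, countably many as $n\to\infty$.

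The main obstacle is the quantitative scale control in the first step: one must verify, uniformly in small $c>0$, that the distance from $(0,0)$ to the $T_1(c)$-image of $v=0$ is genuinely of order $|c|$ (not smaller), so that this arc really crosses $\sigma_s(c)$ of radius $\sim\sqrt{c}$ and does so transversely rather than tangentially. This rests on the genericity input already used for $c<0$, namely $a_1'(0)\ne 0$ together with the Taylor data $a(c)=ac+\cdots$, $b(c)=bc+\cdots$ with $a\ne 0$; granting it, the two crossing points persist and stay transverse, and the remaining spiral and $\lambda$-lemma arguments are routine adaptations of the $c=0$ and $c<0$ constructions.
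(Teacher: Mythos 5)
Your proposal is correct and follows essentially the same route as the paper's proof: the scale separation between the $O(c)$ offset of the trace of $W^u(\gamma_c)$ and the $O(\sqrt{c})$ radius of the trace of $W^s(l_c)$ yields two transverse heteroclinic crossings (the paper verifies this in $\Pi^s(c)$ via the parametric equation $A(c)+\cos(\theta-\rho)+O(c)=0$ with $A(c)=a_1(c)/r(c)\sim\sqrt{c}$ having two simple roots, which is equivalent to your direct geometric argument on $D_1(c)$), then the blow-up of the twist $\Delta_c(\eta)$ as $\eta\to\eta_c+0$ produces the two spirals winding onto $\sigma_u(c)$ and the countable set of transverse homoclinics to $\gamma_c$ exactly as at $c=0$, and finally Smale's $\lambda$-lemma applied to arcs of $T_2(c)(\sigma_u(c))$ gives the homoclinics to $l_c$. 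The one inaccuracy is your closing ``main obstacle'': the genericity condition $a_1'(0)\ne 0$ is not actually needed for this theorem, since analytic dependence on $c$ alone forces the distance from $(0,0)$ to the arc to be $O(c^l)$ with $l\ge 1$, and any such distance (even identically zero, when the arc is a near-diameter) is $\ll\sqrt{c}$ and produces two transverse --- never tangential --- crossings; the paper cites $a_1'(0)\ne 0$ only to fix $l=1$, reserving its essential use for the homoclinic-tangency and elliptic-orbit results.
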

\begin{proof}
Consider first $T_2(c)$-image of the circle $\sigma_u(c)$ on the disk $\Sigma_c$.
Recall that radius of the circle $\sigma_u(c)$ is of the order $\sqrt{c}$, since it defined
by the root of the equation $a_c(\eta)= -c+\omega\eta + O_2(c,\eta)=0$. Hence, we get
$\eta(c)= c/\omega + O(c^2).$ On the other hand, due to an analytic dependence of $T_2(c)$
in $c$, the $T_2(c)$-image of the center $(0,0)$ analytically depends in $c$. Therefore,
the distance from the point $(0,0)$ to the curve being
the $T_2(c)$-pre-image of the segment $u=0$ in $\Pi^s(c)$ has the order $c^l,$ $l\ge 1.$
Moreover, $l=1$ if the inequality $a'_1(0)\ne 0$ holds (see above).
This implies that the curve $T_2(c)(\sigma_u(c))$ intersects, for $c$ small enough,
segment $u=0$ transversely at two points. Indeed, the curve $T_2(c)(\sigma_u(c))$ can
be written in a parametric form with parameter $\theta$ as
$$
u = a_1(c)+r(c)[\gamma(c)\cos\theta + \alpha(c)\sin\theta + O(r)], \;
v-v_+ = b_1(c) - r(c)[\delta(c)\cos\theta + \beta(c)\sin\theta + O(r)],
$$
where $r(c)= \sqrt{2\eta(c)}=\sqrt{c/\omega + O(c^2)}.$ Equating $u_1 =0$ to find
intersection points with segment $u=0$ and dividing both sides at
$r(c)\sqrt{\alpha^2 + \gamma^2}$ we come to the equation w.r.t. $\theta$

$$A(c)+ \cos(\theta - \rho)+ O(c)=0,\; A(c)= a_1(c)/r(c)\sim \sqrt{c},$$
that has two simple roots for $c$ small enough. These simple roots correspond to
two transverse intersection points. The flow orbits through these points
are just $\Gamma_{21}(c)$, $\Gamma_{22}(c)$.
By the reversibility of the map, the closed curve $T_1^{-1}(c)(\sigma_s(c))$ intersects
transversely at two points the segment $v=0$ in $\Pi^u(c)$ as well. The flow orbits through
these intersection points are $\Gamma_{11}(c)$, $\Gamma_{12}(c)$.

Now consider the curvilinear segment being the $T_1(c)$-image of $v=0$ in
$D_1(c)$. As was proved, this curve intersect transversely at two points
the circle $\sigma_s(c)$ which divide the curve into three pieces. The
flow orbits, passing through the middle piece, leave the neighborhood of
the connection, but two remaining pieces give two analytic curves whose
$T(c)$-images are two infinite spirals on $D_2(c)$ which wind up the
circle $\sigma_u(c)$. Their $T_2(c)$-images gives two countable families
of transverse homoclinic orbits for $\gamma_c$.

In order to find transverse homoclinic orbits to $l_c$, we remark that the
segment in $\Pi^s(c)$ given as $u=\kappa>0$ for $\kappa$ small enough
intersect the closed curve $T_2(c)(\sigma_u(c))$ transversely at two
points. The same holds true for all pieces of $T_2(c)$-images of both
spirals winding up at $\sigma_u(c)$ in $D_2(c)$. Thus we have two
countable families of curvilinear segments smoothly accumulating to two
segments of the curve $T_2(c)(\sigma_u(c))$. By Smale's $\lambda$-lemma
\cite{Sm}, there is an integer $n_0 > 0$ such that all $S^n(c)$-images of
curves of both countable families intersect transversely the closed curve
$T^{-1}(c)(\sigma_s(c)$ in $\Pi^u(c)$. Thus, we have an invariant
hyperbolic set in each level $V_c$, $c>0$ small enough.
\end{proof}

\begin{figure}[h]
	\centering
	\includegraphics[width=0.6\linewidth]{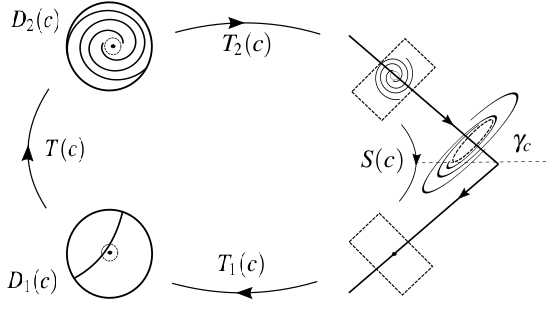}
	\caption{Poincar\'e map as $c>0$, hyperbolic set and tangency.}
\label{fig:10}
\end{figure}
The hyperbolic set, we have constructed, does not exhaust invariant sets in the
level $V_c$. One can mention the wild hyperbolic sets existing near the quadratic homoclinic
tangencies \cite{Duarte}. Here we only prove the existence of elliptic periodic orbits for
intervals of $c$-values accumulating at $c=0.$ To that end, we first prove
\begin{theorem}
There is a sequence of $c_n \to 0$ such that in the level $V_{c_n}$
the Lyapunov saddle periodic orbit $l_{c_n}$ possesses a homoclinic orbit
along which $W^s(l_{c_n})$ and $W^u(l_{c_n})$ have quadratic tangency.
\end{theorem}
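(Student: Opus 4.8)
The plan is to realize the required tangencies as \emph{symmetric} homoclinic tangencies and to reduce them to a tangency of two explicit curves on the cross-section $\Sigma_c$. Recall that a symmetric homoclinic orbit of $l_c$ is one meeting $Fix(L)$, and that $W^s(l_c)=L(W^u(l_c))$; hence if $W^u(l_c)$ is tangent to $Fix(L)$ at a point $q\in Fix(L)$, then $W^s(l_c)$ is tangent to $Fix(L)$ at $q$ as well, so $W^s(l_c)$ and $W^u(l_c)$ are tangent to each other at $q$. Thus it suffices to produce $c_k\to0$ for which the trace of $W^u(l_{c_k})$ on $\Sigma_{c_k}$ is quadratically tangent to the trace $\{u=v\}$ of $Fix(L)$. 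By the preceding theorem this trace is the union $\bigcup_j S^j(c)(\mathcal C_u(c))$, where $\mathcal C_u(c)=T_2(c)(\sigma_u(c))$ is an analytic closed curve of diameter $\sim\sqrt c$ lying near $p_s=(0,v_+)$ and crossing the stable line $u=0$ of $\gamma$ transversely at two points; denote by $w_u\sqrt c$, $w_u\ne0$, its rightmost extent in $u$.

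The computation rests on the invariance of $\zeta=uv$ under the Moser map (\ref{per}): since $S^{-1}(c):(u,v)\mapsto(uf(\zeta),v/f(\zeta))$, one has $S^{-k}(c):(u,v)\mapsto(uf^{k}(\zeta),v/f^{k}(\zeta))$, and therefore $S^{-k}(c)(\{u=v\})=\{u=v\,f^{2k}(uv)\}$. Near $p_s$, where $\zeta$ is small and $f(0)=\nu\in(0,1)$, this preimage of the diagonal is a smooth curve $C^2$-close to the straight line $u=\nu^{2k}v$, i.e. an almost vertical segment at $u\approx\nu^{2k}v_+$ tending to the stable manifold $u=0$ as $k\to\infty$. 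Pulling back by $S^{k}(c)$, the condition that some iterate $S^{k}(c)(\mathcal C_u(c))$ be tangent to $\{u=v\}$ becomes the condition that the almost vertical curve $S^{-k}(c)(\{u=v\})$ be tangent to the small closed curve $\mathcal C_u(c)$; the contact point then lies on the diagonal, so the resulting homoclinic tangency is symmetric.

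The values $c_k$ are produced by a scale-matching sweep. Fix a large $k$ and let $c$ decrease. The almost vertical curve $S^{-k}(c)(\{u=v\})$ is essentially pinned at $u\approx\nu^{2k}v_+$ over the range of heights occupied by $\mathcal C_u(c)$, whereas the rightmost extent $w_u\sqrt c$ of $\mathcal C_u(c)$ decreases monotonically to $0$. When $w_u\sqrt c>\nu^{2k}v_+$ the almost vertical curve cuts $\mathcal C_u(c)$ (which also reaches $u<0$) in two points, and when $w_u\sqrt c<\nu^{2k}v_+$ it misses $\mathcal C_u(c)$ entirely; at the transition value $c_k$, determined by $w_u\sqrt{c_k}\sim\nu^{2k}v_+$, the two curves have a single common point at the right tip of $\mathcal C_u(c_k)$. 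Since $\mathcal C_u$ has curvature of order $1/\sqrt c\ne0$ at this tip while $S^{-k}(c)(\{u=v\})$ is almost straight there, the contact is of exact second order, i.e. a quadratic tangency, and by the reduction above it yields a quadratic tangency of $W^s(l_{c_k})$ and $W^u(l_{c_k})$. As $c_k\sim(v_+/w_u)^2\nu^{4k}\to0$ when $k\to\infty$, this gives the required sequence.

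The main difficulty is the simultaneous double limit $k\to\infty$, $c=c_k\to0$, in which the scale $\sqrt{c_k}$ of $\mathcal C_u(c_k)$ and the slope $\nu^{2k}$ of the pulled-back diagonal degenerate together. One must make the estimates for $f^{2k}(\zeta)$ and its first two derivatives uniform along $\mathcal C_u(c)$ at the relevant scale $\zeta=O(\nu^{2k})$ (using $k\zeta\to0$, so that $f^{2k}(\zeta)=\nu^{2k}(1+o(1))$ together with controlled derivatives), so as to guarantee that $S^{-k}(c)(\{u=v\})$ is straight enough for the contact with the tip of $\mathcal C_u(c)$ to be exactly quadratic and not of higher order; this is precisely what the closed form and the invariance of $\zeta$ provide. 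It then remains to verify the monotone dependence on $c$ of the rightmost extent of $\mathcal C_u(c)$ and of the position of the pulled-back diagonal, and that the double-contact (vanishing discriminant) condition is met at a unique $c_k$, all of which reduce to finite elementary estimates once the reduction to the two planar curves is in place.
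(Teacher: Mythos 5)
Your proposal is correct and follows essentially the same route as the paper: both reduce, via reversibility, to producing a quadratic tangency between the trace of $W^u(l_c)$ and the diagonal $u=v$ of $Fix(L)$, both compute the $S^{-k}$-preimages of the diagonal from the Moser form as solutions of $u=vf^{2k}(uv)$ converging ($C^2$-uniformly in $c$) to $u=0$, and both obtain the tangency values $c_n$ by sweeping the convex closed curve $T_2(c)(\sigma_u(c))$ of size $O(\sqrt{c})$ through these nearly straight curves as $c\to 0$, with quadraticity following from the nonvanishing curvature $\sim 1/\sqrt{c}$ of the approximate ellipse. Your explicit scale-matching asymptotics $c_k\sim(v_+/w_u)^2\nu^{4k}$ and the uniformity discussion for the double limit are a quantitative refinement of, not a departure from, the paper's argument.
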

\begin{proof} As the system is reversible and the related Poincar\'e map is also
reversible, it is sufficient to prove that there exist $c_n$ such that the trace
on $\Sigma_{с_n}$ of $W^u(l_{c_n})$ is a convex closed curve being quadratically
tangent to the line of $Fix(L)$ -- the diagonal $u=v$.

As was proved above, for $c_0$ sufficiently small  as $0<c\le c_0$, the trace of $W^u(l_{c})$
in $\Pi^s(c)$ is a closed curve which intersects the segment $u=0$ at two points.
The $S(c)$-pre-images of the line $u=v$ are a sequence of analytic
curvilinear segments, given as $u=u_k(v,c)$, which tend in $\Pi^s(c)$ to $u=0$
in $C^2$-topology uniformly w.r.t. $c$, as $k\to \infty$. Indeed, the
inverse iterations of $S$ are given as $u_{-n} = f^n(\zeta)u_{-n+1},$ $v_{-n}=
v_{-n+1}/f^n(\zeta),$ $\zeta = u_0v_0=u_{-1}v_{-1}=\cdots$ $=u_{-n}v_{-n}$.
The function $u_{-n}= g_n(v_{-n})$ is found as a solution w.r.t. $u$ of the equation
$u= vf^{2n}(uv)$. Multiplying both sides on $v$, we get the equation $\zeta =
v^2f^{2n}(\zeta).$ Due to form of $f(\zeta)= \nu + O_1(\zeta)$ we have an
estimate $|f|\le (1+\nu)/2 <1$ for sufficiently small $|\zeta|.$ Thus, for
any fixed $v,$ $|v-v_+|\le \delta$ we find the unique solution of the
equation $\zeta_n(v).$ This function is analytic and family tends
uniformly to zero as $n\to \infty.$ This give functions $u_n(v,c)= vf^{2n}(\zeta_n(v)).$
This family tends to zero as $[(1+\nu)/2]^{2n}.$ Since they approach to
zero in $C^2$-topology (in fact, in any $C^k$, $k\ge 2$), this implies
that for $n$ large enough the intersection of the graph of function
$u_n(v,c)$ with the closed curve $T_2(c)(\sigma_u(c)$ occurs in two points similar
as for $u=0.$

Fix some $0<c\le c_0$. For $c_0$ is small enough, the closed curve is, up
to third order terms, an ellipse in $\Pi^s(c)$ whose center approach to the
line $u=0$ with the order $c$ and its principal axes have lengths of the order $\sqrt{c}$
and their rotation angle depends as $c$ and has a limit defined by the matrix of
the linearized map $T_2(0)= T_2$. This implies this family of closed
curves intersects, as $c\to 0,$ all graphs of the functions $u_n(v,c)$ and this
intersection for the individual curve is either transversal or quadratically
tangent, or no intersection points at all. Those values of $c$ when the related
curve of the family is tangent to a fixed $u_n(v,c)$ give the values $c_n$
we search for.
\end{proof}

Now we can again to apply the Theorem \ref{casc} on the existence of elliptic
periodic point in a generic one-parameter unfolding of two-dimensional symplectic
diffeomorphisms that contains a diffeomorphism with a quadratic homoclinic tangency
\cite{Gon,DGG}).

\section{The case 2, $c>0$}

As was shown above, for the case 2 all orbits in the level $V_0$ passing through a small
neighborhood of the connection, other than those of the connection itself, leave this neighborhood
and no orbits exist which stay forever in this neighborhood. Levels $V_c,$ $c<0$, contain
no orbits at all which stay wholly in these levels, since orbits entering to the solid cylinder
through $D_1(c)$, exit from the neighborhood of the point $p$ without intersecting $D_2(c)$.
That is why we consider levels $V_c$ for $c>0$, where orbits arise lying
wholly in a neighborhood of the connection. On the corresponding transversal disk $D_1(c)$
these orbits enter to the solid cylinder through points lying inside the circle
$\sigma_s(c)$ and they exit through $D_2(c)$ inside of the circle
$\sigma_u(c)$ from another solid cylinder (see, Remark \ref{rm:case2}).

Consider the image w.r.t. the map $T_2(c)\circ T(c)\circ T_1(c)$ of the
trace of the unstable manifold $W^u(\gamma_c)$ (i.e. the segment $v=0$
from $\Pi^u(c)$). As was discussed above, generally $T_1(c)$-image of this segment
on the disk $D_1(c)$ is an analytic curvilinear segment whose distance from
the center of the disk $(0,0)$ has the order $c^l,$ $l\ge 1$, due to the analytic
dependence of the map $T_1(c)$ in $c$. If the genericity assumption above $a_1'(0)$ holds,
then $l=1$. On the disk $D_1(c)$ there is a circle $\sigma_s(c)$ defined as
$\eta = \eta(c)= c/\omega + O(c^2),$ being the trace of the stable manifold $W^s(l_c)$.
Thus, its radius is of the order $\sim \sqrt{c}.$ This implies, as above, that $\sigma_s(c)$
and the curvilinear segment (trace of $W^s(\gamma_c)$) intersect each other transversely
at two points for $c$ small enough.

Consider now that interval of the curvilinear segment which lies on $D_1(c)$ inside of the circle
$\sigma_s(c)$. Keeping in mind the modification of the formula for $\Delta_c(\eta)$
(see, Remark \ref{rm:case2}), we see that this interval (without its two extreme points
on the circle $\sigma_s(c)$) is transformed by the map $T(c)$ on $D_2(c)$
where it forms an infinite spiralling analytic curve that winds up by its both ends on the circle
$\sigma_u(c)$ (see, Fig.~\ref{fig:11}). On the same disk $D_2(c)$
there is an analytic curvilinear segment being the $T_2(c)$-pre-image of the segment
$u=0$ from $\Pi^s(c)$. The curvilinear segment intersects transversely the circle
$\sigma_u(c)$, this follows from its symmetry with the related curve in $D_1(c)$.
Since the double spiral winds up by its both ends on the circle $\sigma_s(c)$
and the segment is transverse to the circle, we get, as above, a countable set of intersection
points through which transverse homoclinic orbits of $\gamma_c$ pass.

Here we also have a countable set of intervals of $c$ on which elliptic periodic orbits exist
in $V_c$. Their proof is done by exactly the same manner as for the case 1
and $c>0$. The crucial point here is again to find a sequence of $c_n \to
0$ such that in $V_{c_n}$ a tangent symmetric homoclinic orbit of $l_c$
exists. We again iterate by the maps $S^n(c)$ on the disk $\Sigma(c)$
the closed curve $T_2(c)(\sigma_u(c))$ and find its tangency with the line
$u=v$ of the trace $Fix(L)$. The consideration is the same as in the
preceding section. Thus, we obtain
\begin{theorem}
For the case 2 there exists $c_0>0$ small enough such that on the interval $(0,c_0)$
a countable set of intervals exists whose values of $c$ correspond to
levels $V_c$ containing 1-round elliptic periodic orbit in a
four-dimensional neighborhood of the initial heteroclinic connection.
\end{theorem}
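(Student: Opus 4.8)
The plan is to mirror the construction used in the case 1, $c>0$ section, adapting each step to the modified geometry of case 2 where $\Gamma_1$ and $\Gamma_2$ enter and exit through lids of different solid cylinders. I would begin by recalling that, as established earlier in this section, for $c>0$ the trace of $W^u(\gamma_c)$ (the segment $v=0$ in $\Pi^u(c)$) is carried by $T_1(c)$ to an analytic curvilinear segment on $D_1(c)$ at distance of order $c$ from the center (using $a_1'(0)\neq 0$), which meets the circle $\sigma_s(c)$ of radius $\sim\sqrt{c}$ transversely at two points; the interior arc is then spiralled by $T(c)$ into a double spiral on $D_2(c)$ winding onto $\sigma_u(c)$, whose $T_2(c)$-image produces a countable family of transverse homoclinic orbits to $\gamma_c$. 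This part is essentially done in the preceding paragraph of the excerpt; for the proof of the theorem proper I would concentrate on the elliptic orbits.

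The core step is to produce the sequence $c_n\to 0$ at which $W^s(l_{c_n})$ and $W^u(l_{c_n})$ have a quadratic homoclinic tangency, exactly as in the case 1, $c>0$ theorem. First I would iterate the closed curve $T_2(c)(\sigma_u(c))$ under the Poincar\'e map $S(c)$ on $\Sigma_c$ and track its position relative to the symmetry line $u=v$ of $Fix(L)$. By reversibility it suffices to detect a quadratic tangency of the trace of $W^u(l_c)$ with this diagonal, since a symmetric homoclinic orbit of $l_c$ corresponds to a point where the $W^u$-trace meets $Fix(L)$. As in the earlier argument, $T_2(c)(\sigma_u(c))$ is, up to third-order terms, a small ellipse of axes $\sim\sqrt{c}$ whose center approaches $u=0$ at order $c$ and whose rotation angle has a limit governed by the linearization $T_2(0)$. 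The $S^{-n}(c)$-pre-images of $u=v$ form analytic curvilinear segments $u=u_n(v,c)$ converging in $C^2$ (indeed in any $C^k$) to $u=0$ at rate $\bigl[(1+\nu)/2\bigr]^{2n}$, uniformly in $c$; for $n$ large these meet the ellipse in two points much as $u=0$ does.

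I would then argue, verbatim in spirit with the case 1 proof, that as $c\to 0$ the family of ellipses sweeps across each fixed graph $u=u_n(v,c)$, so that for each such $n$ the intersection is transversal, absent, or quadratically tangent; the $c$-values of quadratic tangency furnish the desired $c_n$. Once these tangencies are located, the elliptic periodic orbits follow by a direct invocation of Theorem \ref{casc} on cascades of elliptic points in generic one-parameter symplectic unfoldings of a quadratic homoclinic tangency, with $c$ playing the role of the unfolding parameter. The main obstacle I anticipate is verifying that the tangency produced is genuinely quadratic and that the unfolding in $c$ is generic in the sense required by Theorem \ref{casc}: one must check that the ellipse crosses the limiting curves $u_n(v,c)$ with nonzero speed in $c$ (so the tangency is traversed transversally in the parameter) and that the second-order contact is nondegenerate. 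Both reduce to the nonvanishing derivative estimates already exploited in Lemma \ref{tn} and the case 1 argument, where $r(c)\sim\sqrt{c}$ forces the relevant curvature-type quantity $u''(v)\sim r(c)^{-1}$ to blow up; I would carry over those computations, noting only the sign change in $\Delta_c(\eta)$ recorded in Remark \ref{rm:case2}, which alters the spiralling orientation but not the monotonic growth of $\Delta_c(\eta_c)\sim-\ln c$ that drives the infinitely many tangencies.
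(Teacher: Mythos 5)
Your proposal is correct and follows essentially the same route as the paper: the paper likewise reduces case 2 to the case 1, $c>0$ argument, iterating the closed curve $T_2(c)(\sigma_u(c))$ under $S^n(c)$ on $\Sigma(c)$ to locate quadratic tangencies with the symmetry line $u=v$ of $Fix(L)$ at a sequence $c_n\to 0$, and then invoking Theorem \ref{casc} to obtain the intervals of $c$ with $1$-round elliptic periodic orbits. The only case-2-specific adjustment, the modification of $\Delta_c(\eta)$ recorded in Remark \ref{rm:case2}, which changes the spiralling but not the $-\ln$ growth driving the tangencies, is exactly the point you flagged.
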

\begin{figure}[h]
	\centering
	\includegraphics[width=0.5\linewidth]{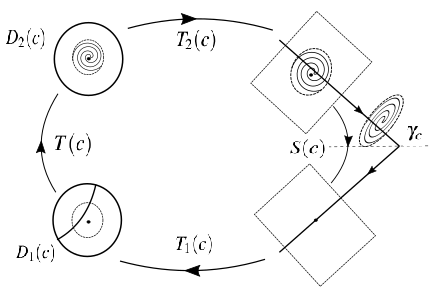}
	\caption{Case 2: Poincar\'e map as $c>0$, hyperbolic set and tangency.}
\label{fig:11}
\end{figure}

\section{1-parameter family of reversible systems: \\homoclinics of the saddle-center}

We consider in this section a generic 1-parameter family of reversible Hamiltonian systems
$X_{H_\mu}$ being an unfolding of a system that has at $\mu =0$ a heteroclinic connection
studied in Sections above. The main result here is a theorem on the
existence of a countable set of parameter values $\mu$ accumulating to
$\mu =0$ for which the related system has a homoclinic orbit of the saddle-center.
Here also it will be shown that emerging homoclinic orbits of the
saddle-center satisfy the general position conditions found in
\cite{Ler,KL1}. These conditions guarantee the existence of complicated dynamics
in the system and its non-integrability \cite{KL3}. It is worth emphasizing that
the result does not depend on what type of the connection is, the first or second one.

Recall that in the class of $C^2$-smooth Hamiltonian systems existence of
a saddle-center equilibrium is a generic phenomenon. Existence of a saddle periodic
orbit is also a generic phenomenon. But for general reversible
perturbations heteroclinic orbits joining a saddle-center and a saddle-periodic orbit
can be destroyed. We want to prove that generically here homoclinic orbits to
a perturbed saddle-center emerge. The theorem we prove can serve a criterion of
the existence of homoclinic orbits of the saddle-center. It is worth remarking
that finding such orbits is a rather delicate problem generally. Such a theorem can also be
used in the case when we deal with a two-parameter family of reversible
Hamiltonian systems and for some specific values of the parameters the
system has a connection mentioned above. Then we can find a countable set
of curves in the parameter space such that for parameters on this curve
the system has a homoclinic orbit of the saddle-center.
\begin{theorem}\label{loops}
Let $X_{H_\mu}$ be a generic one-parameter family of reversible analytic Hamiltonian
systems and at $\mu=0$ the system has a heteroclinic connection of the type 1 or 2
studied above. Then there exists a sequence of parameter values
$\mu_n$ accumulating to $\mu = 0$ such that the related system of the family has
a homoclinic orbit of the general type of a saddle-center. Related values $\mu_n$
have the same sign.
\end{theorem}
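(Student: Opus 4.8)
The plan is to track the one-dimensional unstable separatrix of the perturbed saddle-center and to pin down the parameter values at which it returns to the symmetry set, thereby producing \emph{symmetric} homoclinic orbits. For $\mu$ small the saddle-center persists as a symmetric equilibrium $p_\mu$ of saddle-center type, and the analytic family of saddle periodic orbits persists as a normally hyperbolic $L$-invariant symplectic cylinder; after the harmless normalization $H_\mu(p_\mu)=0$ the level $V_0=\{H_\mu=0\}$ contains $p_\mu$ together with the member $\gamma'_\mu$ of that family lying in it, a symmetric saddle periodic orbit with $\gamma'_\mu\to\gamma$ as $\mu\to 0$. All the constructions of the previous sections (Moser coordinates near $p_\mu$, the disks $D_1(\mu),D_2(\mu)$, the $L$-invariant section $\Sigma_\mu$ near $\gamma'_\mu$ with coordinates $(u,v)$ in which $S_\mu$ has the form (\ref{per}) and $L$ acts as $(u,v)\mapsto(v,u)$, and the global maps $T_1(\mu),T_2(\mu)$) depend analytically on $\mu$ and reduce at $\mu=0$ to the ones already described. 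The separatrix $W^u(p_\mu)$ leaves $p_\mu$ through the centre $(0,0)$ of $D_2(\mu)$, so its trace on $\Sigma_\mu$ is $Q(\mu)=T_2(\mu)(0,0)=(\chi(\mu),v_0(\mu))$, depending analytically on $\mu$ with $Q(0)=(0,v_+)$ on the stable curve $u=0$; here $\chi(\mu)$ is the signed distance from $W^u(p_\mu)$ to $W^s(\gamma'_\mu)$, with $\chi(0)=0$ and $v_0(0)=v_+$. The genericity of the family is precisely the transversality condition $\chi'(0)\neq0$ (the analogue of the assumption $a_1'(0)\neq0$ used above), so $\chi$ is a strictly monotone analytic function near $0$.

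The heart of the matter is the forward $S_\mu$-orbit of $Q(\mu)$. Since $Fix(L)\cap\Sigma_\mu$ is the diagonal $l=\{u=v\}$, the criterion that an orbit meeting $Fix(L)$ is symmetric \cite{Devaney76a} reduces the problem to producing parameter values at which some iterate $S_\mu^{\,k}(Q(\mu))$ lands on $l$: the orbit through such a point is symmetric, its $\alpha$-limit set is $p_\mu$, hence by reversibility its $\omega$-limit set is $L(p_\mu)=p_\mu$, and it is therefore a homoclinic orbit of the saddle-center. In the Moser coordinates $\zeta=uv$ is $S_\mu$-invariant and along the orbit of $Q(\mu)$ equals $\zeta(\mu)=\chi(\mu)v_0(\mu)$; for $\chi(\mu)>0$ this orbit lies on the branch $uv=\zeta(\mu)>0$ in the quadrant $u,v>0$, with $u_k=\chi(\mu)/f^k(\zeta)$ and $v_k=v_0(\mu)f^k(\zeta)$, moving from the stable axis towards the unstable one. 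The crossing condition $u_k=v_k$ reads
\begin{equation}\label{cross}
\chi(\mu)=v_0(\mu)\,f^{2k}\!\bigl(\zeta(\mu)\bigr)=\bigl(v_++o(1)\bigr)\,\nu^{2k}.
\end{equation}
For each sufficiently large $k$ the right-hand side is a fixed small positive number, and since $\chi$ is analytic, strictly monotone and vanishes at $0$, equation (\ref{cross}) has a unique small root $\mu_k$, with $\mu_k\to 0$ as $k\to\infty$.

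Two facts follow immediately. First, the right-hand side of (\ref{cross}) is positive, so a solution exists only on the side of $\mu=0$ on which $\chi(\mu)>0$ (for $\chi<0$ the whole $S_\mu$-orbit stays in the quadrant $u<0,\,v>0$ and never meets $l$); hence all the $\mu_k$ have one and the same sign, as claimed. For a connection of type~2 the separatrix leaves into the opposite solid cylinder and one uses the modified rotation function of Remark~\ref{rm:case2}, but the trace $Q(\mu)$, the invariant $\zeta$, and the monotone dependence of $\chi$ are unchanged, so (\ref{cross}) and the same-sign conclusion persist; the symmetry argument itself is type-independent, which is why the statement does not distinguish the two cases. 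Second, because $\chi(\mu_k)\neq0$ the point $Q(\mu_k)$ lies off $w_s$, so the forward orbit makes only finitely many ($\sim k$) passes near $\gamma'_{\mu_k}$ and is genuinely asymptotic to $p_{\mu_k}$, not to the periodic orbit; the orbit produced is a homoclinic loop of the saddle-center and not a disguised heteroclinic connection.

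It remains to verify that this loop is of general type in the sense of \cite{Ler,KL1}. These are nondegeneracy conditions on the global map along the homoclinic orbit, here the composition $T_1(\mu_k)\circ S_{\mu_k}^{\,k}\circ T_2(\mu_k)$ carrying a neighbourhood of $(0,0)$ in $D_2$ to one in $D_1$. Its nondegeneracy is to be read off the data already at hand: symplecticity gives $\alpha\delta-\beta\gamma\equiv1$, the transversality $\chi'(0)\neq0$ makes the crossing of $l$ in (\ref{cross}) transverse in $\mu$, and the nondegenerate twisting of the passages near $p_\mu$ (the rotation $\Delta$ of (\ref{Del})) and near $\gamma'_\mu$ (the hyperbolic iteration $S_{\mu}^{\,k}$) supplies the required transversality of $W^{cu}(p_\mu)$ and $W^{cs}(p_\mu)$ along the loop. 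I expect the main obstacle to be the two preparatory points rather than the spiral count: one must check that $p_\mu$, $\gamma'_\mu$, the section $\Sigma_\mu$, and the maps $T_1,T_2,S$ all vary analytically within the single moving level $V_0=\{H_\mu=0\}$, and, above all, that the abstract genericity of the unfolding is faithfully encoded by $\chi'(0)\neq0$ (a Melnikov-type computation). Once that splitting function is in place, both the crossing argument built on (\ref{cross}) and the checking of the conditions of \cite{Ler,KL1} are essentially repetitions of the transversality computations carried out in the earlier sections.
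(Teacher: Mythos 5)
Your existence construction coincides with the paper's own proof: both reduce the problem, via the symmetric-orbit criterion, to making an iterate of the separatrix trace $T_2(\mu)(0,0)$ land on the diagonal $u=v$; both solve the resulting crossing equation (in the paper's notation $a(\mu)/(v_++b(\mu))=f_\mu^{2n}\bigl(a(\mu)(v_++b(\mu))\bigr)$, your $\chi(\mu)=(v_++o(1))\nu^{2k}$) by playing the strictly monotone splitting function with $a'(0)\neq 0$ against the exponentially small, positive right-hand side; and both deduce the same-sign conclusion from that positivity. Up to notation ($\chi$ versus $a(\mu)$) this part of your proposal is the paper's argument.

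The gap is in the final third. The theorem asserts homoclinic orbits \emph{of general type}, and the paper devotes roughly half of its proof to verifying the nondegeneracy condition of \cite{Ler}: in Moser coordinates, the linearization of the global map along the loop --- the composition $T_1(\mu)\circ S^n_\mu\circ T_2(\mu)$ carrying $D_2$ to $D_1$ --- must differ from a rotation matrix. The paper computes this linear part explicitly, with entries $A=\nu^{-n}\alpha\gamma-\nu^{n}\beta\delta$, $B=\nu^{-n}\alpha^2-\nu^{n}\beta^2$, $C=\nu^{-n}\gamma^2-\nu^{n}\delta^2$, $D=A$, uses symplecticity ($AD-BC=1$) to reduce the question to the inequality $2-2A^2-B^2-C^2<0$, and observes that this holds for $n$ large because $(\alpha^2+\gamma^2)^2\nu^{-2n}\to\infty$; geometrically, the image of a small circle then meets a circle of equal radius in four points, which a rotation cannot do. Your proposal replaces this by an appeal to ``symplecticity, transversality $\chi'(0)\neq 0$, and nondegenerate twisting,'' declared to be a repetition of earlier transversality computations. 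It is not: the transversality of the crossing in $\mu$ concerns parameter dependence, not the linearization of the return map along the loop, and the rotation $\Delta$ near $p_\mu$ does not enter that map at all (the global map in the sense of \cite{Ler,KL1} excludes, by definition, the local passage near the saddle-center). The operative mechanism is the one ingredient you mention only in passing --- the hyperbolic factor $S^n_{\mu_n}$, whose expansion $\nu^{-n}$ dominates the matrix entries and pushes the linear part far from the orthogonal group. Without this computation, or an equivalent one, the ``general type'' clause of the theorem remains unproved in your write-up.
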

\begin{proof} Because of the reversibility, it is sufficient to prove the
existence of a sequence $\mu_n \to 0$ for which the Hamiltonian system $X_{H_{\mu_n}}$ has
an unstable separatrix of the saddle-center that intersects the cross-section
$\Sigma(\mu_n)$ at the line $Fix(L)$.

Remind that for the case of a smooth one-parameter family of reversible analytic Hamiltonian
systems being an unfolding of a system with a heteroclinic
connection, all objects under consideration, a saddle-center, a periodic
orbit in the singular level of the Hamiltonian, their stable and unstable
manifolds smoothly depend on $\mu.$ As was indicated above, Moser theorems
hold also for systems depending on parameters, therefore there is a smoothly
depending on a parameter change of variables such that in new coordinates $(x_1,y_1,x_2,y_2)$
in a neighborhood of equilibrium $p_\mu$ the Hamiltonian has the form of the analytic functions
$h_\mu$ in variables $\xi = x_1y_1$, $\eta = (x_2^2+y_2^2)/2$. The difference with
the case without parameters is in a smooth dependence on the parameter $\mu$ of coefficients
of the function $h_\mu$. Further we work in these coordinates, thus cross-sections $D_1$ и
$D_2$ to separatrices in the singular level of the Hamiltonian and $N$ can be regarded fixed
and not depending on the parameter. But in global maps zero order terms smoothly depending on
$\mu$ appear, as separatrices of the perturbed saddle-center do not lie, generally speaking,
on manifolds of the saddle periodic orbit.

The form of the perturbed global maps is as follows
$$
\begin{array}{l}
T_2(\mu):\;u_1=a(\mu)+\gamma(\mu)\bar{x}_2 + \alpha(\mu)\bar{y}_2 + \cdots,\\
\hspace{7.mm}v_1-v_+ = b(\mu)-\delta(\mu)\bar{x}_2 - \beta(\mu)\bar{y}_2 + \cdots,\\
T_1(\mu):\;x_2=b(\mu)+\alpha(\mu)(u-u_-) + \beta(\mu)v + \cdots,\\
\hspace{17.mm}y_2= a(\mu)+\gamma(\mu)(u-u_-) +\delta(\mu)v + \cdots,\\
S(\mu):\;\;u_1=u/f_\mu(\zeta), \; v_1 = vf_\mu(\zeta),\;\zeta=uv,\;f_\mu = \nu(\mu)+O_1(\zeta).
\end{array}
$$
The genericity condition for the family in coordinates means the inequality to hold
$a'(0)\ne 0.$ In virtue of the assumptions on the family we have $a(0)=0,$ $b(0)= 0,$
$\nu(0)=\nu < 1.$ Geometrically, the genericity condition means that for $\mu \ne 0$ the trace
of unstable separatrix of the saddle-center intersects the trace of the stable manifold
of the saddle periodic orbit transversely as $\mu$ varies. More spectacularly, this can be
imagined in the space $(u,v,\mu)$ where the segment $(0,0,\mu)$ represents
the 1-parameter family of saddle fixed points of the maps, the rectangular
$u=0$ represents the family of traces of stable manifolds of fixed points,
and $v=0$ is a family of traces of unstable manifolds of the saddle
fixed points. The curve of traces on the cross-section of unstable
separatrices of the saddle-centers for every small $\mu$ intersects transversely
the rectangular $u=0$ at a unique point where $\mu =0$.

Consider now the point $(0,0)$ on the disk $D_2$ being the trace of the unstable separatrix
of the perturbed saddle-center. This point under the action of the map
$S^n_(\mu)\circ T_2(\mu)$ transforms into $u_n= a(\mu)/f^n_\mu(\zeta),
\quad v_n = (v_+ +b(\mu))f^n_\mu(\zeta)$. The condition this point to lie on the
line of fixed points of involution gives the equality $u_n=v_n$, i.e.
$a(\mu)/f^n_\mu(\zeta) = (v_+ +b(\mu))f^n_\mu(\zeta)$, $\zeta = a(\mu)(v_+ +b(\mu))$.
Let us write down this equation w.r.t. $\mu$ in the form
$$
\frac{a(\mu)}{v_+ +b(\mu)}=f_\mu^{2n}(a(\mu)(v_+ +b(\mu))) > 0.
$$
Function $r(\mu)$ in the left side is defined on some neighborhood of $\mu=0$, $r(0)=0,$
$r'(0)= a'(0)/v_+ \ne 0,$ so, this function is strictly monotone. The sequence of functions
in the right side as $n\to \infty$ is a sequence of functions defined in a neighborhood of zero
and tending uniformly to zero. Thus, for large enough $n$ for those values of $\mu,$ where
$a(\mu)$ is positive, the equation for every such $n$ has a unique solution $\mu_n \to 0$
as $n\to \infty$. Asymptotics of values $\mu_n$, for which homoclinic orbits of the saddle-center
exist is as follows
$$
\mu_n=\frac{\nu^{2n}v_+}{a'(0)}.
$$

We shall now show that for these values $\mu_n$ homoclinic orbits of the saddle-center
for the related Hamiltonian system satisfy the genericity condition from \cite{Ler}.
In Moser coordinates in a neighborhood of the saddle-center  this conditions means
that the linearization matrix of the global map at the trace of a homoclinic orbit
differs from a rotation matrix.

Consider $\mu = \mu_n$. The global map in a neighborhood of the homoclinic orbit
 is the composition of maps $T_1^{\mu}\circ S^{\mu}\circ T_2^{\mu}$, i.e.
\begin{eqnarray}
x_2=[b(\mu)+\nu^{-n}(\mu)\alpha(\mu)a(\mu)-\alpha(\mu)u_- + \beta(\mu)v_+ +
\nu^n(\mu)\beta(\mu)b(\mu)+
\nonumber\\
+[\nu^{-n}(\mu)\alpha(\mu)\gamma(\mu)-\nu^n\beta(\mu)\delta(\mu)]\bar{x}_2+
[\nu^{-n}(\mu)\alpha^2(\mu)-\nu^n(\mu)\beta^2(\mu)]\bar{y}_2+ \cdots, \nonumber
\end{eqnarray}
\begin{eqnarray}
y_2= [a(\mu)+\nu^{-n}(\mu)\gamma(\mu)a(\mu)-\gamma(\mu)u_- + \delta(\mu)v_+ +\nu^n(\mu)
\delta(\mu)b(\mu)]+\nonumber\\+[\nu^{-n}\gamma^2(\mu)-\nu^n\delta^2(\mu)]\bar{x}_2+
[\nu^{-n}(\mu)\alpha(\mu)\gamma(\mu)-\nu^n(\mu)\beta(\mu)\delta(\mu)]\bar{y}_2+ \cdots \nonumber
\nonumber
\end{eqnarray}
Zero order terms of $T_1^{\mu}\circ S^{\mu}\circ T_2^{\mu}$ are equal to zero.
Denote $A=\nu^{-n}(\mu)\alpha(\mu)\gamma(\mu)-\nu^n\beta(\mu)\delta(\mu),\quad
B=\nu^{-n}(\mu)\alpha^2(\mu)-\nu^n(\mu)\beta^2(\mu), \quad C=\nu^{-n}\gamma^2(\mu)-
\nu^n\delta^2(\mu), \quad D=\nu^{-n}(\mu)\alpha(\mu)\gamma(\mu)-\nu^n(\mu)\beta(\mu)\delta(\mu)$.
Then the map $T_1^{\mu}\circ S^{\mu}\circ T_2^{\mu}$ is written as follows
$$
x_2=A\bar{x}_2+B\bar{y}_2+\dots ,\qquad y_2=C\bar{x}_2+D\bar{y}_2+\dots.
$$
Since $T_1^{\mu}, S^{\mu}, T_2^{\mu}$ are symplectic maps, then their composition
$T_1^{\mu}\circ S^{\mu}\circ T_2^{\mu}$ is also symplectic map. Let us
show that the composition $T_1^{\mu}\circ S^{\mu}\circ T_2^{\mu}$ is different from the
a rotation map. To this end, consider circles $S_1:x_2^2+y_2^2=2\eta, \quad S_2:
\bar{x}_2^2+\bar{y}_2^2=2\eta$ of the same radius in neighborhoods in $D_1,
D_2$, respectively, and prove that the circle $S_2$ under the action of
$T_1^{\mu}\circ S^{\mu}\circ T_2^{\mu}$ intersects $S_1$ at four points.

For this we use symplectic polar coordinates:
$$
x_2=\sqrt{2\eta}\cos\theta,\quad y_2=\sqrt{2\eta}\sin\theta
$$
$$\bar{x}_2=\sqrt{2\eta}\cos\varphi,\quad \bar{y}_2=\sqrt{2\eta}\sin\varphi$$
Then the condition of intersection $S_1$ and $T_1^{\mu}\circ S^{\mu}\circ T_2^{\mu}(S_2)$ is
equivalent to the equation:
\begin{eqnarray}\label{g1}
1=(A\cos\varphi+B\sin\varphi)^2+(C\cos\varphi+D\sin\varphi)^2 \Leftrightarrow\nonumber \\
\frac{A^2+B^2+C^2+D^2}{2}+\frac{A^2+C^2-B^2-D^2}{2}\cos2\varphi+(AB+CD)\sin2\varphi=1
\Leftrightarrow  \nonumber \\ \sin(2\varphi+F)=\frac{2-(A^2+B^2+C^2+D^2)}
{\sqrt{4(AB+CD)^2+(A^2+C^2-B^2-D^2)^2}}
\end{eqnarray}
The equation (\ref{g1}) has four solutions on the segment $[0,2\pi)$, if the inequality holds
$$
(2-(A^2+B^2+C^2+D^2))^2<4(AB+CD)^2+(A^2+C^2-B^2-D^2)^2.
$$
Taking into account that $A=D,\quad AD-BC=1 $ we get  $2-2A^2-B^2-^2<0$. Insert into the last
inequality
$A,B,C\qquad 2-2(\nu^{-n}\alpha(\mu)\gamma(\mu)-\nu^n\beta(\mu)\delta(\mu))^2-(\nu^{-n}
(\mu)\alpha^2(\mu)-\nu^n(\mu)\beta^2(\mu))^2-(\nu^{-n}\gamma^2(\mu)-\nu^n\delta^2(\mu))^2<0$.
After some calculations  we get $2-(\alpha^2(\mu)+\gamma^2(\mu))^2\nu^{-2n}(\mu)+
2(\alpha^2(\mu)\beta^2(\mu)-\gamma^2(\mu)\delta^2(\mu))^2-
(\beta^2(\mu)+\delta^2(\mu))^2\nu^{2n}(\mu)<0$. The last inequality will hold
beginning since some $n>n_0$ as $(\beta^2(\mu)+\delta^2(\mu))^2\nu^{2n}(\mu)\rightarrow 0,
\quad (\alpha^2(\mu)+\gamma^2(\mu))^2\nu^{-2n}(\mu)\rightarrow \infty$ as $n\rightarrow+\infty$.
Thus we have proved the map $T_1\circ S\circ T_2$ have its linear part different from
the rotation matrix.
\end{proof}

\section{Conclusion}

In the paper we study some dynamical phenomena in a one-parameter
unfolding reversible Hamiltonian systems which contain a system with a
symmetric heteroclinic connection that involves a symmetric saddle-center,
a symmetric saddle periodic orbit in the same level of the Hamiltonian and
a pair of heteroclinic orbits joining the saddle-center and periodic orbit
and permutable by the reversible involution. We found several types of
hyperbolic sets, cascades of elliptic periodic orbits and countable
sets of parameters for which homoclinic orbits to the saddle-center exist.
All this characterized the chaotic orbit behavior of the related systems.

\section{Acknowledgement}

The work by L.Lerman was partially supported by the Laboratory of Topological Methods
in Dynamics NRU HSE, of the Ministry of Science and Higher Education of RF, grant
\#075-15-2019-1931 and by project \#0729-2020-0036. Proofs of Theorems 6,8
were performed by K.N. Trifonov under a support of the Russian Science Foundation
(project 19-11-00280), Theorem 4 was proved by him under support of RFBR
(grants 18-29-10081, 19-01-00607).

\section{Data availability}

The data that support the findings of this study (proofs, pictures) are
placed in the body of the text. If some extra requirements appear, they
should addressed to the corresponding author.

\newpage


\begin{thebibliography}{99}

\bibitem{Birk} G.D. Birkhoff, On the periodic motions of dynamical systems,
Acta math., v.50 (1927), 359—379.
\bibitem{Conley} C.C. Conley, On the Ultimate Behavior of Orbits with Respect to
an Unstable Critical Point I. Oscillating, Asymptotic, and Capture Orbits,
J. Diff. Equat., v.5 (1969), 136-158.
\bibitem{Duarte} P. Duarte, Abundance of elliptic isles at conservative
bifurcations, Dynam.\&Stab. Syst., v.14 (1999), No.4, 339-356.
\bibitem{GSh} N.K. Gavrilov, L.P. Silnikov, On the three dimensional dynamical systems close to a
system with a structurally unstable homoclinic curve. I. Math. USSR Sbornik 17, 467-485
(1972); II. Math. USSR Sbornik 19, 139-156 (1973).
\bibitem{GoSh} S.V. Gonchenko, L.P. Shilnikov, On two-dimensional analytic area-preserving
diffeomorphisms with infiitely many stable elliptic periodic points Regular Chaotic Dyn. v.2
(1997), 106–123.
\bibitem{GTSh} S.V. Gonchenko, L.P. Shilnikov and D.V. Turaev, Homoclinic tangencies of
arbitrarily high orders in conservative and dissipative two-dimensional maps, Nonlinearity
v.20 (2007), 241–275.
\bibitem{KH} A.B. Katok, B. Hasselblatt. Introduction to the Modern Theory of
Dynamical Systems, Cambridge Univ. Press, Revised Edition, 1993.
\bibitem{GR} C. Grotta Ragazzo, Irregular dynamics and homoclinic orbits to
Hamiltonian saddle-centers. Commun. Pure Appl. Math., v.50 (1997),
No.2, 105-147.
\bibitem{GR1} C. Grotta Ragazzo, Nonintegrability of some Hamiltonian systems,
scattering and analytic continuation, Comm. Math. Phys. v.166 (1994), 255–277.
\bibitem{Bruno} A.D. Bryuno, Normalization of a Hamiltonian system near
an invariant circle or torus, Russian Math. Surv., v.44 (1989),
No.2, 53–89.
\bibitem{M} D. McDuff, D. Salamon. Introduction to Symplectic Topology,
Second edition, Clarendon Press, Oxford, 1998.
\bibitem{Dev} R.L. Devaney, Homoclinic orbit in Hamiltonian systems, J.
Diff. Equat., v.21 (1976), 431-438.
\bibitem{Devaney76a} R.L. Devaney. Reversible diffeomorphisms and flows.
Trans. A.M.S., 218: 89-113, 1976.
\bibitem{Devaney77} R.L. Devaney. Blue sky catastrophes in reversible and
Hamiltonian systems. Indiana Univ. Math. J., 26: 247-263, 1977.
\bibitem{Gon} M.S. Gonchenko, S.V. Gonchenko, On cascades of elliptic
periodic points in two-dimensional symplectic maps with homoclinic tangencies,
Regular and Chaotic Dynamics, 2009, Vol. 14, No. 1, 116–136.
\bibitem{DGG} A. Delshams, M.S. Gonchenko, S.V. Gonchenko, On dynamics and bifurcations of
area-preserving maps with homoclinic tangencies, Nonlinearity, v.28 (2015),
3027–3071.
\bibitem{Hom} A.J. Homburg, Global Aspects of Homoclinic Bifurcations of Vector Fields,
Mem. Amer. Math. Soc., vol.121 (1996), no.578, 128 pp.
\bibitem{K} A. Kelley, The stable, center-stable, center, center-unstable, and unstable
manifolds, J. Diff. Equations, v.3 (1967), No.4, 546-570.
\bibitem{KL} O.Yu. Koltsova, L.M. Lerman, Periodic and homoclinic orbits in a two-parameter
unfolding of a Hamiltonian system with a homoclinic orbit to a saddle-center, Int. J.
Bifurcation \& Chaos, V.5 (1995), No.2, 397-408.
\bibitem{KL1} O.Yu. Koltsova, L.M. Lerman, Families of Transverse Poincar\'e
Homoclinic Orbits in 2N-Dimensional Hamiltonian Systems close to The System
with a Loop to a Saddle-center, Int. J. Bifurcation \& Chaos, v.6 (1996), No.6, 991-1006.
\bibitem{KL3} O.Yu. Koltsova, L.M. Lerman, New criterion of nonintegrability
for an N-degrees-of-freedom Hamiltonian system, in "Hamiltonian Systems with three
and more degrees of freedom", C.Simo, Editor, NATO ASI Series, Series C: Math. and
Phys. Sciences -- vol.533, Kluwer A.P., 1999, 458-470.
\bibitem{Ler} L.M. Lerman, Hamiltonian Systems with Loops of a Separatrix of a Saddle-Center,
Selecta Math. Soviet., v.10 (1991), No.3, 297-306 (transl. of a Russian paper of 1987).
\bibitem{LU} L.M. Lerman, Ya.L. Umanskiy, Classification of four-dimensional integrable Hamiltonian
systems and Poisson actions of $\mathbb R^2$ in extended neighborhoods of simple singular points,
I, Russian Acad. Sci. Sb. Math. Vol.77 (1994), No.2 511-542.
\bibitem{LU1} L.M. Lerman, Ya.L. Umanskii, On the existence of separatrix loops in four-dimensional
systems similar to the integrable Hamiltonian system, PMM USSR, Vo1.47 (1984), No.3,
335-340.
\bibitem{KLM} N. Kulagin, L. Lerman, A.I. Malkin, Solitons and cavitons in a nonlocal Whitham
equation, Comm. Nonlin. Sci. Numer. Simul., v.93 (2021),
doi.org/10.1016/j.cnsns.2020.105525.
\bibitem{Malkin} A.I. Malkin, Acoustic solitons in elastic tubes filled
with a liquid, Doklady: Mechanics, v.342 (1995), No.5, 621-625 (in Russian).
\bibitem{MHR}  A. Mielke, P. Holmes, O. O’Reilly, Cascades of homoclinic orbits to,
and chaos near a Hamiltonian saddle-center, J. Dyn. Different. Equat., v.4 (1992), 95–126.
\bibitem{MR} L. Mora, N. Romero, Persistence of homoclinic tangencies for
area preserving maps, Ann. de la Facult\'e des Sciences de Tolouse, VI(4),
711-725.
\bibitem{M} J.\,Moser, On the generalization of theorem of Liapounoff,
Comm. Pure Appl. Math. 11:2 (1958) 257-271.
\bibitem{M0} J.\,Moser, The analytic invariants of an area-preserving mapping near
a hyperbolic fixed point, Comm. Pure Appl. Math., v.9 (1956), 673-692.
\bibitem{New} S. Newhouse, Diffeomorphisms with infinitely many sinks. Topology 13, 9-18 (1974)
\bibitem{New1} S. Newhouse, Quasi-elliptic periodic points in conservative dynamical systems.
Am. J. Math. v.99 (1977), 1061-1087.
\bibitem{Poin} A. Poincar\'e. Methodes nouvelles de la mechnique celeste, v.2.
\bibitem{Russmann1} H. R\"ussmann, \"Uber das Verhalten Analitischer Hamiltonscher
Differential Gleichungen in der N\"ahe einer Gleichgewichtsl\"osung, Math. Ann., b.154
(1964), 285-300.
\bibitem{SHil} L.P. Shilnikov, On a Poincar\'e-Birkhoff problem, Math. USSR-Sb., v.3
(1967), No.3, 353-371.
\bibitem{Shil} L.P. $\hat{S}$il'nikov, A contribution to the problem of the structure of an extended
neighborhood of a rough equilibrium state of saddle-focus type, Math.
USSR-Sbornik, v.10:1 (1970), 91–102.
\bibitem{Shil3} V.S. Afraimovich, V.V. Bykov, L.P. Shilnikov, On attractive structurally
unstable sets of the Lorenz attractor type, Proc. Moscow Math. Soc.,
v.44 (1982), 150–212.
\bibitem{TurShi} D.V. Turaev, L.P. Shilnikov, Hamiltonian systems with homoclinic saddle
curves, Dokl. Math., 39:1 (1989), 165–168.
\bibitem{Shil1} L.P. Shilnikov, S.V. Gonchenko, D.V. Turaev, Models with a structurally
unstable homoclinic Poincar\'e curve, Dokl. Math., 44:2 (1992), 422–426.
\bibitem{Smale} S. Smale, Diffeomorphisms with infinitely many periodic  points,
in "Differential and Combinatorial Topology", Ed. S Cairns. Princeton Math. Ser.,
Princeton, NJ: Princeton Univ. Press, 63-80.
\bibitem{Sm} S. Smale, Differential dynamical systems, Bull. Amer. Math. Soc.,
v.73 (1967), No.6, 747-817.
\bibitem{Tur} D. Turaev, Hyperbolic Sets near Homoclinic Loops to a Saddle
for Systems with a First Integral, Reg. Chaot. Dyn., Vol. 19 (2014), No.6,
681–693.
\bibitem{Yag} K. Yagasaki, Horseshoes in Two-Degree-of-Freedom
Hamiltonian Systems with Saddle-Centers, Arch. Rational Mech. Anal. v.154
(2000), 275–296.
\end{thebibliography}
\end{document}